\documentclass[11pt,a4paper]{amsart}
\pdfoutput=1
\usepackage[margin=2cm]{geometry}
\usepackage[utf8]{inputenc}
\usepackage[T1]{fontenc}
\usepackage{microtype}
\usepackage{mathtools}
\usepackage{amssymb}
\usepackage{stmaryrd}
\usepackage{verbatim}
\usepackage{tikz}
\usetikzlibrary{calc,positioning}
\usepackage{tikz-3dplot}
\usetikzlibrary{3d}
\usepackage{graphicx}
\usepackage{enumitem} \setlist{noitemsep}
\usepackage[labelformat=simple,labelfont=up]{subcaption}

\usepackage{xcolor}
\usepackage[cmtip,all]{xy}
\usepackage{thmtools}
\usepackage{chngcntr}
\usepackage{hyperref}
\hypersetup{
    pdftitle={Visual right-angled Artin subgroups of two-dimensional
  right-angled Coxeter groups},    
    pdfauthor={Christopher H. Cashen and Alexandra Edletzberger},     
    pdfkeywords={Right-angled Coxeter group, right-angled Artin group, RACG,
  RAAG, visual subgroup, commensurable, satellite-dismantlable}, 
    colorlinks=true,       
    linkcolor=black,          
    citecolor=black,        
    filecolor=black,      
    urlcolor=black           
}

\declaretheorem[
	name=Theorem,
	numberwithin=section
	]{thm}
\declaretheorem[
	name=Lemma,
	sibling=thm,
	]{lem}
\declaretheorem[
	name=Proposition,
	sibling=thm,
	]{prop}
\declaretheorem[
	name=Corollary,
	sibling=thm,
	]{cor}

\declaretheorem[
	name=Definition,
	style=definition,
	sibling=thm
	]{defin}

\declaretheorem[
	name=Coning Algorithm,
	style=remark,
	sibling=thm
	]{alg}
\declaretheorem[
	name=Global Search Algorithm,
	style=remark,
	sibling=thm
	]{galg}
\declaretheorem[
	name=Relative Search Algorithm,
	style=remark,
	sibling=thm
	]{ralg}
\declaretheorem[
	name=Example,
	style=remark,
	sibling=thm
	]{exam}

 \declaretheorem[
        numbered=no,
	name=Remark,
	style=remark,
	]{remark}

\counterwithin{figure}{section}


\DeclareMathOperator{\lk}{lk}
\DeclareMathOperator{\st}{st}
\DeclareMathOperator{\supp}{supp}
\DeclareMathOperator{\hull}{hull}
\newcommand{\diag}{\boxslash}
\newcommand{\join}{*}
\newcommand{\edge}{\mathbin{\tikz[baseline=-\the\dimexpr\fontdimen22\textfont2\relax ]{\filldraw (0,0) circle (1pt) (.2,0) circle (1pt);\draw (0,0)--(.2,0);}}}
\renewcommand{\setminus}{-}
\newcommand{\blue}{\texttt{b}}
\newcommand{\red}{\texttt{r}}
\newcommand{\CFS}{$\mathcal{CFS}$}
\newcommand{\cut}[2]{\binom{#1}{#2}}

\title[Visual RAAG subgroups of 2--dimensional RACGs]{Visual
  right-angled Artin subgroups of two-dimensional right-angled Coxeter groups}
\author[Cashen]{Christopher H.\ Cashen}
\address{Faculty of Mathematics\\University of
  Vienna\\Oskar-Morgenstern-Platz 1\\1090 Vienna, Austria\\
\href{https://orcid.org/0000-0002-6340-469X}{\includegraphics[scale=.75]{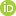}
  0000-0002-6340-469X}}
\email{christopher.cashen@univie.ac.at}
\author[Edletzberger]{Alexandra Edletzberger}
\address{Faculty of Mathematics\\University of
  Vienna\\Oskar-Morgenstern-Platz 1\\1090 Vienna, Austria\\
\href{https://orcid.org/0000-0002-6584-5149}{\includegraphics[scale=.75]{ORCID-iD_icon-16x16}
  0000-0002-6584-5149}}
\email{alexandra.edletzberger@gmail.com}

\thanks{This paper is an extension of  a section of
  the thesis of the second author \cite[Section~4.1]{thesis}, submitted in partial fulfilment of the requirements for the degree of
Doktorin der Naturwissenschaft (Dr.\ rer.\ nat.) at the University of
Vienna.
A.E.\ is grateful to Pallavi Dani for drawing her attention to the
questions addressed in this paper.
Both authors supported by the Austrian Science Fund (FWF): P34214-N} 
\keywords{Right-angled Coxeter group, right-angled Artin group, RACG,
  RAAG, visual subgroup, commensurable, satellite-dismantlable}

\begin{document}
\begin{abstract}
  There is a procedure, due to Dani and Levcovitz, for taking a finite
  simplicial graph \(\Gamma\) and a
  subgraph \(\Lambda\) of its complement, checking some
  conditions, and, if satisfied, producing a graph \(\Delta\) such that the
  right-angled Artin group with presentation graph \(\Delta\) is a
  finite index subgroup of the right-angled Coxeter group with
  presentation graph \(\Gamma\).
They do not tell us how to find \(\Lambda\),  given \(\Gamma\).

We show, in the 2--dimensional case, that the existence of such a
\(\Lambda\) is connected to the graph property of
satellite-dismantlabilty of \(\Gamma\), and we use this to give an algorithm for
producing a suitable \(\Lambda\) or deciding that one does not exist.

\end{abstract}
\vspace*{-.5cm}
\maketitle
\vspace{-.5cm}
\section{Introduction}
Every right-angled Artin group (RAAG) is a finite index subgroup of a right-angled Coxeter group (RACG), and given the presentation graph $\Delta$ of the RAAG $A_\Delta$ there is a simple graph operation that turns it into a graph $\Gamma$ that is the presentation graph of the RACG supergroup $W_\Gamma$ \cite{davisjanus}.
The converse is not true; RACGs are more varied, and there are invariants such as divergence that show that some RACGs are not even quasiisometric to a RAAG.
So, what graph conditions on $\Gamma$ imply that $W_\Gamma$ is commensurable to a RAAG?

Consider the case that $\Gamma$ is a square.
Each pair of diagonal vertices generate an infinite dihedral group, and these two dihedral groups commute.
Each of the dihedral groups has an index two, infinite cyclic subgroup, and these make an index four, $\mathbb{Z}^2$ subgroup of $W_\Gamma$.
This is the basic example of a finite index \emph{visual} RAAG subgroup; it is `visual' in the sense that we `see' the RAAG generators as pairs of non-adjacent vertices of $\Gamma$, and they commute when the vertex pairs from $\Gamma$ make a square.

This situation generalizes as follows: let $\Lambda$ be a subgraph of the complement graph $\Gamma^c$ of $\Gamma$; that is, $\Gamma^c$ has the same vertex set as $\Gamma$, and has an edge if and only if $\Gamma$ does not.
Edges of $\Lambda$ give pairs of generators of $W_\Gamma$ that generate an infinite dihedral subgroup.
Let $\Delta$ be the graph with one vertex for each edge of $\Lambda$, and an edge between two vertices if the corresponding subgroups commute, which is the case exactly when they span a square in $\Gamma$.
There is a homomorphism from $A_\Delta$ to $W_\Gamma$ given by sending a generator of $A_\Delta$ to the product in $W_\Gamma$ of the two endpoints of the corresponding edge of $\Lambda$. 
In general, however, this homomorphism is not injective, nor does it have finite index image.
Based on initial results of LaForge \cite{LaForgeVisArtin}, Dani and Levcovitz \cite{danilev24} give conditions on $\Lambda$ that determine whether the natural homomorphism is injective when $\Lambda$ has at most two connected components.
In particular, this is always sufficient \cite[Lemma~4.7]{danilev24} for the 2--dimensional case, when $\Gamma$ is triangle-free, and in this case they give necessary and sufficient conditions on $\Lambda$ for $A_\Delta$ to be a finite index subgroup of $W_\Gamma$.

We focus on the 2--dimensional case, and call a $\Lambda<\Gamma^c$ satisfying their conditions a \emph{finite index Dani-Levcovitz $\Lambda$ (FIDL--$\Lambda$)}.
Checking that a given subgraph of $\Gamma^c$ is a FIDL--$\Lambda$ is algorithmic, and 
since $\Gamma$ is finite, one can simply enumerate subgraphs of $\Gamma^c$ and check them all.
This is slow, even for rather small examples.
We are interested in a more efficient algorithm for starting from $\Gamma$ alone and either producing a FIDL--$\Lambda$ or deciding that one does not exist.

We give such an algorithm as Global Search Algorithm~\ref{SearchAlg}.
The key step, Theorem~\ref{maintheorem}, is that a FIDL--$\Lambda$ exists if and only if $\Gamma$ admits a satellite-dismantling sequence that reduces it to a square and satisfies some additional conditions that can be checked only from $\Gamma$.  
We apply this algorithm to a large number of examples via computer computations in a forthcoming work.

\section{Preliminaries} \label{SectionPreliminaries}
\subsection{Graphs}
$A\join B$ denotes the \emph{join} of $A$ and $B$; that is, the complete bipartite graph with one part the elements of $A$ and the other the elements of $B$.
A graph is \emph{complete} if for every pair of vertices there exists an edge between them. It is \emph{incomplete} if there exist two vertices that are not joined by an edge. The empty graph and a graph consisting of a single vertex are complete. 
A \emph{clique} is a complete subgraph.
A graph $\Gamma$ is \emph{separated by a clique} if there is a clique $C$ such that $\Gamma\setminus C$ has more than one connected component. A disconnected graph is separated by the empty clique. 
The \emph{link} $\lk(v)$ of a vertex $v$ in a graph is the induced subgraph on its neighboring vertices.
The \emph{star} $\st(v)$ of a vertex $v$ is $\{v\}\join\lk(v)$.
A \emph{loop} is an edge path that starts and ends at the same vertex, and a \emph{cycle} is a loop that has no repeated vertices.  
A subgraph of $\Gamma$ is \emph{induced} if it contains all of the edges between its vertices.
The \emph{induced subgraph} of a set of vertices is the induced subgraph that they span.
Given a cycle $\gamma$, an \emph{$n$--chord}, or just \emph{chord} when $n=1$, is a path of length $n$ between vertices $x$ and $y$ of $\gamma$ such that both subsegments of $\gamma$ between $x$ and $y$ have length greater than $n$. 

Distinct vertices $v$ and $w$ are \emph{twins} if $\lk(v)=\lk(w)$, and  
 $v$ is a \emph{satellite of $w$} if $\lk(v)\subset\lk(w)$.
A vertex is a \emph{satellite} if it is a satellite of some vertex. 
A graph $\Gamma$ is \emph{satellite-dismantlable to a square} if there exists a sequence $\Gamma=\Gamma_0\supset\Gamma_1\supset\dots\supset\Gamma_n$ such that $\Gamma_i\setminus\Gamma_{i+1}$ is a single satellite and $\Gamma_n$ is a square. 
This is reminiscent of  the more common graph-theoretic notion of a \emph{dismantlable graph}, in which a vertex $v$ is \emph{dominated by} $w$ if $\st(v)\subset\st(w)$, and a graph is dismantlable if it is possible to reduce it to a single vertex by removing one dominated vertex at a time.

\subsection{RACGs and RAAGs} \label{SecRACGsAndRAAGs}
See  \cite{DavisCoxeter} and  \cite[Section 2.6]{CharneyRAAGs} for background on RACGs and RAAGs.
\begin{defin}
  The \textit{right-angled Coxeter group (RACG)} $W_{\Gamma}$ defined by a finite, simplicial graph $\Gamma$ is:
 \[W_{\Gamma} = \langle s \in \Gamma  \mid s^2 = 1 \text{ for all } s \in \Gamma \, , st = ts \text{ if } (s,t) \in \mathrm{Edges}(\Gamma)\rangle\]
    The \textit{right-angled Artin group (RAAG)} $A_{\Delta}$ defined by a finite, simplicial graph $\Delta$ is:
    \[A_{\Delta} = \langle m \in \Delta \mid mn = nm \text{ if } (m,n) \in \mathrm{Edges}(\Delta)\rangle\]
The graphs are called the \emph{presentation graphs}\footnote{This is different from the conventions used to define the \emph{Coxeter graph}, which is more commonly used for not-necessarily-right-angled Coxeter groups.}.
\end{defin}

\begin{defin}
  If $W_\Gamma$ is a RACG and $\Upsilon$ is an induced subgraph of $\Gamma$, then the subgroup of $W_\Gamma$ generated by vertices of $\Upsilon$ is called a \emph{special subgroup}.
It is a RACG with presentation graph $\Upsilon$, and is denoted $W_{V(\Upsilon)}$ or $W_\Upsilon$.
The analogous statement and terminology also applies to RAAGs.
\end{defin}

We will restrict to one--ended groups.
A RACG is one--ended if and only if it is incomplete and has no separating clique \cite[Theorem 8.7.2]{DavisCoxeter}.
A RAAG is one-ended if it is connected and has at least two vertices.
To further simplify the set-up, we consider only RACGs and RAAGs whose Davis and Salvetti complexes, respectively, are two-dimensional. This is satisfied for both RACGs and RAAGs if the presentation graph is triangle-free.

There is a quasiisometry invariant known as \emph{divergence}.
In particular, if a group has polynomial divergence then the degree of the polynomial is a quasiisometry invariant.
RAAGs have at most quadratic divergence \cite[Corollary 4.8]{BehrstockCharneyDivRAAGs}, thus so does every group quasiisometric  to a RAAG.
By work of Dani and Thomas \cite[Theorem 1.1]{DaniThomasDiv} a one-ended, two-dimensional RACG has at most quadratic divergence if and only if its presentation graph has a property known as $\mathcal{CFS}$ (component of full support/constructed from squares). This was generalized to higher dimension in \cite[Definition 1.3]{BehrstockCFS}.

\begin{defin} \label{defCFS}
 The \emph{diagonal graph} $\diag(\Gamma)$ of $\Gamma$ is the graph whose vertices are diagonals of induced squares in $\Gamma$, with $\{a,b\}$ and $\{c,d\}$ connected by an edge if $\{a,b\}\join\{c,d\}$ is an induced square in $\Gamma$.

  The \emph{support} $\supp(\{a,b\})$ of a vertex $\{a,b\}$ of $\diag(\Gamma)$ is the pair of vertices $\{a,b\}$ in $\Gamma$.
  The support of a subset of $\diag(\Gamma)$ is the union of the supports of its vertices. 

  The graph $\Gamma$ is \CFS\ if $\diag(\Gamma)$ has a connected component whose support is all non-cone vertices of $\Gamma$. If $\Gamma$ is triangle-free and not a star then it has no cone vertices, and we can simply say that some component of $\diag(\Gamma)$ has full support, ie, its support is all of $\Gamma$.

  The graph $\Gamma$ is \emph{strongly \CFS} if it is \CFS\ and $\diag(\Gamma)$ is connected. 
\end{defin}

\begin{remark}
  The usual definition of \CFS\ uses a graph $\square(\Gamma)$ whose vertices are induced squares of $\Gamma$, with an edge between two vertices if they intersect in a diagonal.
  The graphs $\square(\Gamma)$ and $\diag(\Gamma)$ carry the same information, but $\diag(\Gamma)$ is topologically simpler, since many squares intersecting in a common diagonal form a star in $\diag(\Gamma)$ but a clique in $\square(\Gamma)$. The diagonal graph is also more natural for our purposes because when we have $\Gamma$ with a FIDL--$\Lambda$, then the commuting graph $\Delta$ of $\Lambda$ sits as a subgraph in $\diag(\Gamma)$. 
\end{remark}

\begin{thm}[{\cite{DaniThomasDiv}}]\label{weaklycfs}
  If $\Gamma$ is an incomplete, triangle--free graph without separating cliques such that $W_\Gamma$ is quasiisometric to a RAAG then $\Gamma$ is \CFS.  
  \end{thm}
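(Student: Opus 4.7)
The plan is to combine three ingredients already mentioned in the preamble: Davis's one-endedness criterion for RACGs, the Behrstock-Charney quadratic divergence bound for RAAGs, and the Dani-Thomas characterization of polynomial divergence for two-dimensional RACGs.

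First I would verify that the hypotheses on $\Gamma$ put us squarely in the setting of these cited results. Incompleteness together with the absence of separating cliques gives, via \cite[Theorem~8.7.2]{DavisCoxeter}, that $W_\Gamma$ is one-ended; triangle-freeness makes the Davis complex two-dimensional. In particular $W_\Gamma$ is an infinite, finitely generated group, so its divergence function is well defined up to the standard equivalence on non-decreasing functions, and its equivalence class is a quasiisometry invariant.

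Next I would carry out the divergence estimate. Every RAAG has at most quadratic divergence by \cite[Corollary~4.8]{BehrstockCharneyDivRAAGs}, so the assumption that $W_\Gamma$ is quasiisometric to a RAAG transports this upper bound to $W_\Gamma$. By the Dani-Thomas dichotomy \cite[Theorem~1.1]{DaniThomasDiv}, a one-ended, two-dimensional RACG has polynomial divergence precisely when its presentation graph is \CFS, the alternative being exponential divergence. Hence $\Gamma$ must be \CFS.

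Since the substantive content is all inside the cited results, no new obstacle arises here beyond checking their applicability. The step worth pausing on is the Dani-Thomas dichotomy itself, which is much stronger than what we actually invoke: it says that failure of \CFS\ for a triangle-free presentation graph forces exponential divergence of the associated RACG. In this two-dimensional regime the quasiisometry hypothesis is therefore overkill for concluding \CFS; merely ruling out exponential divergence would already suffice.
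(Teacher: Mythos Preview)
Your approach is correct and is exactly the argument the paper sketches in the paragraph preceding the theorem; the paper does not give a separate proof but cites Dani--Thomas. One small correction: the Dani--Thomas result \cite[Theorem~1.1]{DaniThomasDiv} is not a polynomial/exponential dichotomy---non-\CFS\ triangle-free graphs can still give polynomial divergence of higher degree---so the correct statement (as the paper records) is that the divergence is at most quadratic if and only if $\Gamma$ is \CFS; your closing remark about exponential divergence should be dropped, but the main chain of implications stands unchanged.
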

  \begin{lem}\label{cfsimpliesoneended}
    If $\Gamma$ is incomplete, triangle-free, and \CFS\, then it has no separating clique.
  \end{lem}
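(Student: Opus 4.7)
The plan is to argue by contradiction: assume $\Gamma$ admits a separating clique $C$ and derive that $\diag(\Gamma)$ cannot have a component of full support, contradicting CFS. First I note that $\Gamma$ cannot be a star, since a star contains no induced square and so $\diag(\Gamma)$ would be empty, incompatible with the CFS assumption for a non-complete triangle-free graph. Consequently $\Gamma$ has no cone vertex, and by CFS some connected component $K$ of $\diag(\Gamma)$ has support equal to all of $V(\Gamma)$.

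Since $\Gamma$ is triangle-free, $C$ is either empty, a single vertex $v$, or an edge $\{u,v\}$. The central geometric observation is that the four vertices of any induced square $S$ in $\Gamma$ lie in $C_i\cup C$ for a single connected component $C_i$ of $\Gamma-C$. In the empty and single-vertex cases, the vertices of $S$ outside $C$ form a path or a cycle in $\Gamma-C$ and hence sit in a single component. In the edge case $C=\{u,v\}$, both $u$ and $v$ can lie in $S$ only as adjacent corners (opposite corners of an induced square are non-adjacent while $u\sim v$), and then the remaining two corners are joined by an edge in $\Gamma-C$ and so lie together in a single component.

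Next I would propagate this to $\diag(\Gamma)$ by induction along its edges to conclude that the support of $K$ is contained in a single $C_i\cup C$. If $d$ and $d'$ are adjacent in $\diag(\Gamma)$, then $d\cup d'$ is the vertex set of a square, so by the previous step it lies in some $C_i\cup C$; the core task is to show that the index $i$ is constant as one moves along $K$. The delicate configuration is a diagonal $\{u,x\}$ with $x\in C_i$ sitting in a square that also contains $v$: the other diagonal is $\{v,y\}$ with $y$ adjacent to $x$, forcing $y\in C_i$ (otherwise the edge $xy$ would cross between components of $\Gamma-C$), and triangle-freeness excludes any vertex $c$ with $u\sim c\sim v$ that would otherwise allow an escape to another component. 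Once confinement is established, any vertex in a second component $C_j$ of $\Gamma-C$, which exists because $C$ separates, lies outside the support of $K$, the desired contradiction. The step I expect to require the most bookkeeping is the inductive confinement in the edge case, where one must systematically rule out every way a diagonal could reach into another $C_j$ via $u$ or $v$; a uniform case split on how many of $u,v$ appear in each of the two diagonals, controlled by triangle-freeness, should suffice.
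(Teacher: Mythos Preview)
Your approach is correct and amounts to the contrapositive of the paper's argument. The paper proceeds directly: for any $a,b\in\Gamma$ it uses the full-support component of $\diag(\Gamma)$ to produce a chain of induced squares $\{p_i,q_i\}\join\{p_{i+1},q_{i+1}\}$ with $a\in\{p_0,q_0\}$ and $b\in\{p_n,q_n\}$, and then simply asserts that this union of squares is not separated by a clique. The implicit reason is the same fact you isolate: in a triangle-free graph a clique $C$ has at most two (necessarily adjacent) vertices, so it meets each diagonal $\{p_i,q_i\}$ in at most one point, and a surviving vertex from each diagonal yields a path from $a$ to $b$ in $\Gamma\setminus C$. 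Your version packages this as ``every induced square lies in a single $C_i\cup C$'' and then propagates along $\diag(\Gamma)$. The paper's route is shorter because it avoids the case split on $|C|$ and the propagation bookkeeping; your route makes the structural statement (the support of any $\diag(\Gamma)$-component is confined to one side of any clique cut) explicit, which is a mildly stronger conclusion than bare connectivity.

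The propagation you flag as delicate is in fact immediate once you observe that each diagonal carries a well-defined index. For $d=\{a,b\}\in\diag(\Gamma)$, since $a\not\sim b$ while any two vertices of $C$ are adjacent, at most one of $a,b$ lies in $C$; the other lies in a unique component $C_{i(d)}$, and your square observation then forces $d\subset C_{i(d)}\cup C$. If $d$ and $d'$ are adjacent in $\diag(\Gamma)$ they span a single square contained in some $C_j\cup C$, whence $i(d)=j=i(d')$. So $i$ is locally constant on $\diag(\Gamma)$ and hence constant on $K$, with no separate treatment of the edge case $C=\{u,v\}$ and no appeal to the absence of common neighbours of $u$ and $v$ required.
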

  \begin{proof}
    Take $a,b\in\Gamma$.
    If $a$ and $b$ are the diagonal of some square then they are not separated by a clique.
    Otherwise, 
    there is a path $\{p_0,q_0\},\dots,\{p_n,q_n\}$ in the full support component of $\diag(\Gamma)$ with $n>0$, $p_0=a$, and $p_n=b$.
    This path corresponds to a chain of squares $\{p_i,q_i\}\join\{p_{i+1},q_{i+1}\}$ in $\Gamma$, with successive squares sharing a diagonal. The union of the squares is not separated by a clique.  
  \end{proof}

\subsection{Dani-Levcovitz conditions} \label{DLAlgorithm}
Let $\Theta = \Theta(\Gamma,\Lambda)$ be the graph with vertex set $\Gamma$, with edges from both $\Gamma$ and $\Lambda<\Gamma^c$.
The edges coming from $\Gamma$ are \emph{$\Gamma$--edges}, and the edges coming from $\Lambda$ are \emph{$\Lambda$--edges}. Similarly, a path consisting only of $\Gamma$--edges is a $\Gamma$--path, etc.
The $\Lambda$--hull, $\hull_\Lambda$, of a subset of vertices of $\Theta$ is the vertex set of their convex hull in $\Lambda$.
A set of vertices is $\Lambda$--convex if it is equal to its $\Lambda$--hull.

Dani and Levcovitz \cite{danilev24} give \emph{subgroup conditions} $\mathcal{R}_1$--$\mathcal{R}_4$ to determine that the RAAG $A_\Delta$ on the commuting graph $\Delta$ associated to $\Lambda$ is a visual RAAG subgroup of $W_\Gamma$.
They give \textit{index conditions} $\mathcal{F}_1$ and $\mathcal{F}_2$  to ensure that the visual RAAG subgroup is of finite index in $W_\Gamma$.
They show in the 2--dimensional case that it always suffices to find $\Lambda$ with two components, and for two component $\Lambda$ their conditions are necessary and sufficient.

The conditions are as follows, simplified by specializing to the case that $\Gamma$ is an incomplete, triangle-free graph without separating cliques.
Let $\Lambda_\red$ (red) and $\Lambda_\blue$ (blue) be disjoint, connected subgraphs of $\Gamma^c$, with $\Lambda=\Lambda_\red\sqcup\Lambda_\blue$.
\begin{itemize}
\item[$\mathcal{R}_1$:] $\Lambda_\red$ and $\Lambda_\blue$ are trees.
\item[$\mathcal{R}_2$:] $\Lambda_\red$ and $\Lambda_\blue$ are induced subgraphs of $\Theta$.
\item[$\mathcal{F}_1$:] $\Lambda$ spans $\Gamma$. 
\end{itemize}
These conditions are true if and only if $\Gamma$ is bipartite, with a bicoloring $\red/\blue$ (every vertex is colored either $\red$ or $\blue$, and adjacent vertices have different colors), and  $\Lambda_\red$ and $\Lambda_\blue$ are trees in $\Gamma^c$ spanning the $\red$ and $\blue$ parts, respectively.
We will not state $\mathcal{F}_2$. In our case it is always satisfied if $\mathcal{R}_2$ and $\mathcal{F}_1$ are \cite[Remark~4.3]{danilev24}.
Assuming these conditions, we can state the remaining two conditions in simplified form:
\begin{itemize}
  \item[$\mathcal{R}_3$:] If $\{a,b\}\join\{c,d\}$ is a square in $\Gamma$ then $\hull_\Lambda\{a,b\}\join\hull_\Lambda\{c,d\}\subset \Gamma$.
\item[$\mathcal{R}_4$:] If $a\edge b$ is an edge in a cycle $\gamma$ then there is a square $\{a,a'\}\join\{b,b'\}$ with $a',b'\in\hull_\Lambda(\gamma)$.
\end{itemize}

Notice that the assumption that $\Gamma$ is incomplete with no separating clique implies $W_\Gamma$ is 1--ended, so $A_\Delta$ is 1--ended, so $\Delta$ is connected and has more than one vertex. Thus, every edge of $\Lambda$ is a diagonal of a square in $\Gamma$, since otherwise it would have nothing to commute with, so would give an isolated vertex in $\Delta$. 
Thus, we may identify $\Delta$ with a subgraph of $\diag(\Gamma)$.

\bigskip

One nice application of these conditions in \cite{danilev24} is to connect them to conditions given by Nguyen and Tran \cite{NguyenTranPlanar} on deciding when a \emph{planar} graph $\Gamma$ defines a RACG that is quasiisometric to a RAAG.
The conclusion is that, for planar $\Gamma$,  $W_\Gamma$ being quasiisometic to a RAAG implies graph conditions that imply $\mathcal{R}_1-\mathcal{R}_4$ and $\mathcal{F}_1$ and $\mathcal{F}_2$, so $W_\Gamma$ actually has a finite index visual RAAG subgroup, which happens always to be defined by a tree $\Delta$. 
Dani and Levcovitz also give two families of non-planar graphs to which their conditions apply and yield $\Delta$ that are not trees.
We mention one of these families here:

\begin{exam} \label{ExampleCubeWithDiagonal}
  A \emph{bicycle wheel} is a graph consisting of adjacent vertices $x$ and $y$, the `hub', a circle of even length $2n\geq 6$ given by $c_1$, $d_1$, $c_2$,\dots, $d_n$, the `rim', and edges from each $c_i$ to $x$, and from each $d_i$ to $y$, the `spokes'.
  \begin{figure}[h]
    \centering
    \rotatebox{0}{\tdplotsetmaincoords{70}{0}
    \begin{tikzpicture}[tdplot_main_coords, scale=1]
      \pgfmathsetmacro{\r}{2}
      \pgfmathsetmacro{\h}{.5}
      \pgfmathsetmacro{\n}{11}
            \tdplotsetcoord{O}{0}{0}{0}
      \tdplotsetcoord{h0}{\h/2}{0}{0}
      \tdplotsetcoord{h1}{\h/2}{180}{0}
      \begin{scope}\foreach \i in {0,...,\n}{
          \tdplotsetcoord{a}{\r}{90}{2*\i*180/\n}{90}
          \tdplotsetcoord{b}{\r}{90}{(2*\i+1)*180/\n}
          \tdplotsetcoord{c}{\r}{90}{(2*\i+2)*180/\n}
          \filldraw (a) circle (1pt);
          \filldraw (b) circle (1pt);
          \draw (a)--(h0);
          \draw (b)--(h1);
        }\end{scope}
 \tdplotdrawarc{(O)}{\r}{0}{360}{}{}
\draw (h0)--(h1);
\filldraw (h0) circle (1pt);
\filldraw (h1) circle (1pt);
\end{tikzpicture}}
    \caption{A bicycle wheel}
    \label{fig:bicyclewheel}
  \end{figure}
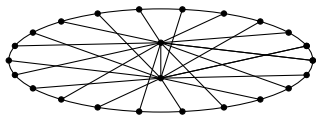

A bicycle wheel admits a 2--component FIDL--$\Lambda$ consisting of the opposite of the spokes: one star consisting of an edge from $x$ to each $d_i$ and another consisting of an edge from $y$ to each $c_i$.
The commuting graph $\Delta$ is a circle of the same length as the rim. 

In Figure~\ref{fig:CubeWithDiag} the case $n=3$ is also recognizable as the 1--skeleton of a 3--cube with one space diagonal:
    \begin{figure}[ht!]
        \centering
        \scriptsize
        \def\svgwidth{200pt}
        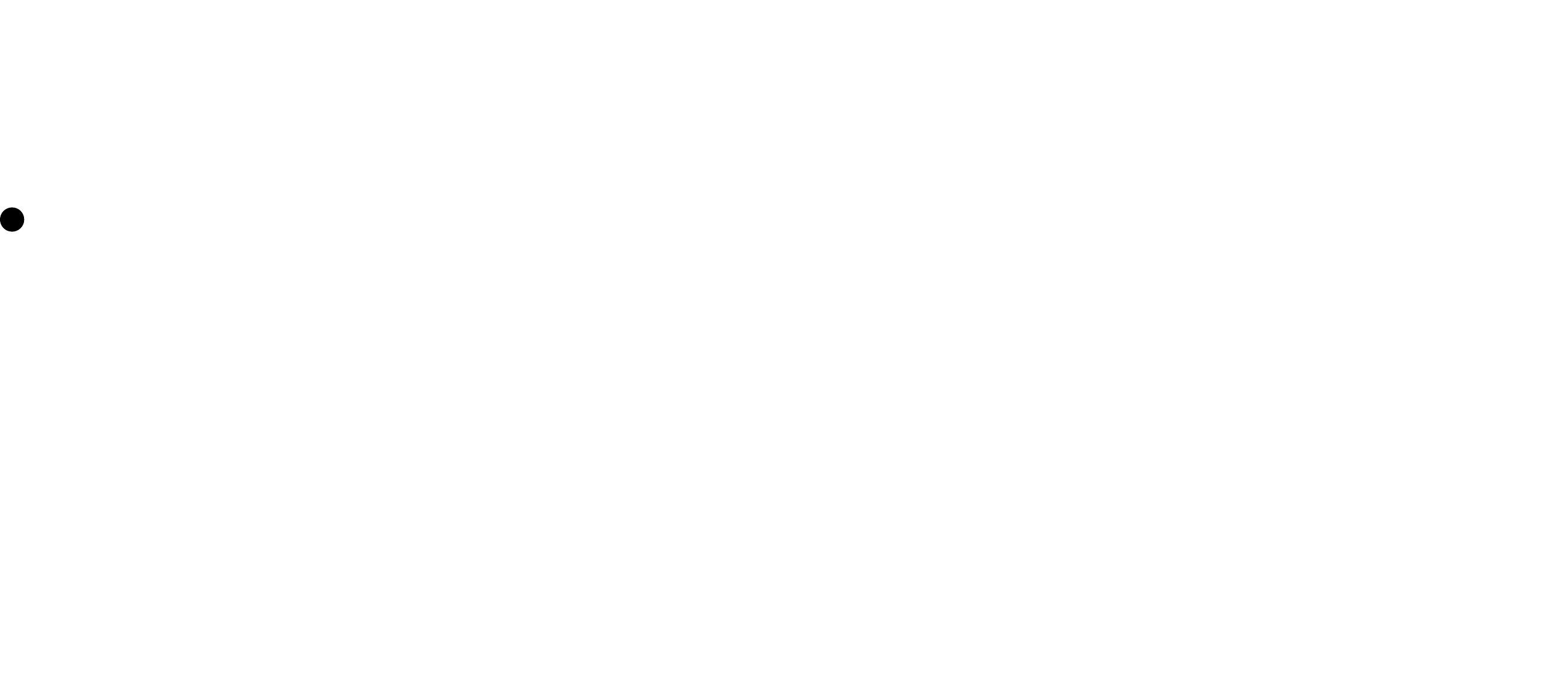
        \caption{}
        \label{fig:CubeWithDiag}
    \end{figure}
\end{exam}

\subsection{Splittings of RACGs}
We have mentioned that $W_\Gamma$ is one--ended when it is incomplete with no separating clique.
This corresponds to not having a splitting as an amalgamated product over a finite group.
The next simplest splittings are over two-ended, or virtually $\mathbb{Z}$, groups. 

A \emph{JSJ decomposition} of a finitely presented group is a certain maximal graph of groups decomposition (see \cite{JSJDecompGps} for the precise definition) with two-ended edge groups and vertex groups in three categories: two-ended, \textit{hanging} or \textit{rigid}. A hanging vertex group is essentially the fundamental group of a surface with boundary and a rigid vertex group does not split any further with respect to its incident edge groups.
JSJ decompositions are not unique, but there is a way to encode all of them simultaneously in a 
\textit{JSJ graph of cylinders}. It is the canonical representative for the deformation space of JSJ decompositions of the group and it can be used to deduce quasiisometry invariants, by \cite{CashenMartinStructureInvar}.
The idea is that some of the two--ended edge and vertex groups in a JSJ decomposition may be commensurable, and these can be grouped together to form \emph{cylinders}, and from this a new decomposition is derived.
For RACGs, all of this is visible in the presentation graph: 
Mihalik and Tschantz \cite{MihalikTschantzVisual} show a one-ended, two-dimensional RACG $W_\Gamma$ admits a splitting over a two-ended subgroup if and only if $\Gamma$ has a \emph{cut} $\cut{a}{b}$:
\begin{defin}\label{def:cut}
  If $\Gamma$ is an incomplete, triangle-free graph without separating cliques, a \emph{cut} $\cut{a}{b}$ means one of the following, both of which have the property that  the element $ab\in W_\Gamma$ generates an infinite cyclic subgroup that is finite index in $W_{\cut{a}{b}}$.
  \begin{itemize}
  \item A \emph{cut pair} $\cut{a}{b}=\{a,b\}$:  a pair of non-adjacent vertices such that $\Gamma\setminus\{a,b\}$ is not connected.
    \item A \emph{2--path cut triple} $\cut{a}{b}=\{a,b,c\}$: a triple of vertices with $c\in\lk(a)\cap\lk(b)$ such that $\Gamma\setminus\{a,b\}$ is connected but $\Gamma\setminus\{a,b,c\}$ is not. 
  \end{itemize}
\end{defin}

Definition~\ref{def:cut} implies that every component of $\Gamma\setminus\cut{a}{b}$ contains a neighbor of each vertex in $\cut{a}{b}$.

A cut pair $\{a,b\}$ is \textit{crossed} by another, disjoint, cut pair $\{c,d\}$ if $a$ and $b$ lie in different connected components of $\Gamma \setminus \{c,d\}$.
A cut $\cut{a}{b}=\{a,b,c\}$ is \textit{crossed} by a cut $\cut{d}{e}=\{d,e,f\}$ if $c$ is equal to $f$ and $a$ and $b$ lie in different connected components of $\Gamma \setminus \{d,e,c\}$.
A cut  that is not crossed by any other cut is \textit{uncrossed}.
Crossing cuts are responsible for hanging vertices in the JSJ decomposition, but RAAGs do not have these \cite{margolis2018quasi}, so they cannot appear in groups quasiisometric to RAAGs.

\begin{thm}[{\cite[Theorem~3.29]{edletzberger2021quasi}}]\label{ThmJSJGOCforRACGs}
  Let $\Gamma$ be an incomplete, triangle free graph without separating cliques.
  If the JSJ graph of cylinders of $W_\Gamma$ has no hanging vertices, it consist of:
\begin{itemize}
    \item For every pair $\{a,b\}$ such that there is an uncrossed cut $\cut{a}{b}$, there is a cylinder vertex with vertex group $W_{\{a,b\} \cup (\lk(a)\cap\lk(b))}$.
    \item For every set $B$ of essential (valence at least 3) vertices in $\Gamma$ satisfying the following conditions,  there is a rigid vertex with vertex group $W_B$:
        \begin{itemize}[leftmargin=1.5cm]
            \item[(B1)] No cut separates $B$.\label{B1n}
            \item[(B2)] The set $B$ is maximal among all sets satisfying \hyperref[B1n]{(B1)}. \label{B2n}
            \item[(B3)] $|B| \geq 4$. \label{B3n}
        \end{itemize}
\end{itemize}
Furthermore a pair of vertices is connected by an edge if and only if the pair consists of a cylinder vertex and a rigid vertex whose vertex groups intersect in a subgroup containing the two-ended cut defining the cylinder. The edge group is the intersection of its vertex groups.
\end{thm}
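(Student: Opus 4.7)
The plan is to apply the Guirardel--Levitt construction of the graph of cylinders to the JSJ decomposition of $W_\Gamma$ over two-ended subgroups, and to translate the group-theoretic output into combinatorial data on $\Gamma$ via the Mihalik--Tschantz correspondence between cuts in $\Gamma$ and two-ended splittings. Since $\Gamma$ is incomplete with no separating clique, $W_\Gamma$ is one-ended, so JSJ theory over two-ended subgroups applies.

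First I would identify the commensurability classes of two-ended subgroups appearing as edge groups. Any such edge group is, up to conjugation, virtually $\langle ab\rangle$ for some cut $\cut{a}{b}$ in $\Gamma$. Two cuts give commensurable subgroups precisely when they share the same pair $\{a,b\}$, because a 2--path cut triple $\{a,b,c\}$ has $c$ commuting with both $a$ and $b$, so its associated cut subgroup is commensurable with $\langle ab\rangle$. Next I would compute the cylinder stabilizer, which is the commensurator in $W_\Gamma$ of $\langle ab\rangle$. Working in the Davis complex, one checks that this commensurator equals the centralizer of $ab$, and that the centralizer is exactly the special subgroup $W_{\{a,b\}\cup(\lk(a)\cap\lk(b))}$, using that a generator of $W_\Gamma$ commutes with both $a$ and $b$ iff it is one of them or a common neighbor. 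Hanging vertices in the JSJ graph of cylinders arise precisely from crossing cuts, so the hypothesis of no hanging vertices forces every cut to be uncrossed, yielding the stated bijection between cylinder vertices and pairs $\{a,b\}$ admitting an uncrossed cut.

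For the rigid vertex groups, the approach is to show each is a special subgroup $W_B$ characterized by conditions (B1)--(B3). Maximality (B2) captures that rigid pieces do not further split over two-ended subgroups; the non-separation condition (B1) is the combinatorial translation of this non-splitting via Mihalik--Tschantz; essentiality and $|B|\geq 4$ rule out pieces already contained in, or identified with, cylinder vertex groups. The edge structure then follows from the general graph-of-cylinders formalism: an edge joins $C_{\{a,b\}}$ to $R_B$ exactly when $\{a,b\}\subset B$, with edge group $W_B\cap W_{\{a,b\}\cup(\lk(a)\cap\lk(b))}$, which contains $\langle ab\rangle$ as required.

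The main obstacle will be identifying the rigid vertex groups exactly with these special subgroups $W_B$. This requires first showing that rigid vertex groups of the JSJ decomposition of $W_\Gamma$ are special subgroups, and then showing the corresponding vertex sets are precisely those satisfying (B1)--(B3). I expect this to need a careful inductive argument tracking how uncrossed cuts partition the essential vertices of $\Gamma$ into maximal non-separable pieces, and how each such piece assembles into a vertex of the graph of cylinders via the refinement procedure underlying the Guirardel--Levitt construction.
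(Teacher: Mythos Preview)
This theorem is not proved in the paper; it is quoted verbatim from \cite[Theorem~3.29]{edletzberger2021quasi} as background, with no argument given here. There is therefore no in-paper proof to compare your proposal against.

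Your sketch is a plausible outline of how the cited result is established, and the ingredients you list---Mihalik--Tschantz for visibility of two-ended splittings, identification of commensurators of $\langle ab\rangle$ with $W_{\{a,b\}\cup(\lk(a)\cap\lk(b))}$, and the Guirardel--Levitt cylinder construction---are the right ones. One caution: your claim that the absence of hanging vertices forces every cut to be uncrossed is backwards as stated; crossed cuts are what produce hanging vertices, so the hypothesis rules out crossed cuts, which is what you need, but you should phrase the implication in that direction. Also, the edge condition in the theorem requires the intersection of vertex groups to \emph{contain} the two-ended cut subgroup, not merely that $\{a,b\}\subset B$; these are equivalent here but you should say why. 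Since the paper treats this as a black box, any correct proof you supply would be supplementary rather than a comparison.
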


\section{FIDL--\texorpdfstring{$\Lambda$}{Lambda} Convexity}\label{SecConvexity}

  If $\Lambda$ is a forest and $x$ and $y$ are in the same component,
  let $[x,y]_\Lambda$  denote the unique $\Lambda$--geodesic joining them.

\begin{lem}\label{lem:componentscontaincommonneighbors}
  Let $\Gamma$ be an incomplete, triangle-free graph with no separating clique that
  admits a FIDL--$\Lambda$.
  Let $\cut{v}{v'}$ be a cut of $\Gamma$.
  Every component of $\Gamma\setminus \cut{v}{v'}$ contains a common
neighbor of $v$ and $v'$.
\end{lem}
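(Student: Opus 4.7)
The plan is to argue by contradiction. Suppose there is a component $C$ of $\Gamma\setminus\cut{v}{v'}$ containing no common neighbor of $v$ and $v'$. First, observe that $v$ and $v'$ are non-adjacent in $\Gamma$: this is part of the definition in the cut pair case, and in the 2--path cut triple case, adjacency of $v$ and $v'$ together with the common neighbor $c\in\cut{v}{v'}$ would violate triangle-freeness. By the remark immediately following Definition~\ref{def:cut}, one may pick $a\in C\cap\lk(v)$ and $b\in C\cap\lk(v')$; the contradiction hypothesis forces $a\neq b$. I would arrange the choice to minimize the $C$-distance $d_C(a,b)$.

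Next, I build a cycle $\gamma$ from four $\Gamma$-pieces: the edge $va$, a shortest $C$-path $\alpha$ from $a$ to $b$, the edge $bv'$, and a shortest $\Gamma$-path $\beta$ from $v'$ back to $v$ avoiding $C$. Such a $\beta$ is available by routing through another component (cut pair case) or via $c$ (2--path case). Applying $\mathcal{R}_4$ to the edge $va$ of $\gamma$ produces a square $\{v,v''\}\join\{a,a''\}$ in $\Gamma$ with $v'',a''\in\hull_\Lambda(\gamma)$. Since $v''\in\lk(a)\subseteq C\cup\cut{v}{v'}$ and the diagonal pair forces $v''\neq v$, the options for $v''\notin C$ are $v''=v'$, which would give $a\in\lk(v)\cap\lk(v')$ contrary to hypothesis, or $v''=c$ (in the 2--path case), which contradicts the non-adjacency of diagonals in the square because $v$ and $c$ are $\Gamma$-adjacent. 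Hence $v''\in C$. A parallel argument for $a''$ rules out $a''=v'$ (same reason), and either rules out $a''=c$ directly or, if it survives, forces $a''$ to yield a degenerate configuration which I would handle separately; generically, $a''\in C$.

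The principal obstacle is the final step: converting the square, which has three vertices $v'',a,a''\in C$ and one vertex $v\in\cut{v}{v'}$, into a contradiction with minimality. The intended strategy is to exploit that $a''\in C\cap\lk(v)$ is a new $v$-neighbor in $C$ joined to $a$ by the $C$-path $a,v'',a''$ of length two, and, combined with a symmetric application of $\mathcal{R}_4$ at the edge $bv'$ (which produces $b''\in C\cap\lk(v')$ in the analogous fashion), to construct a strictly shorter $C$-path between $C\cap\lk(v)$ and $C\cap\lk(v')$. The delicate point is to control where $v''$ and $a''$ actually sit along $\alpha$ using the constraint $v'',a''\in\hull_\Lambda(\gamma)$, possibly supplemented by $\mathcal{R}_3$ applied to the square $\{v,v''\}\join\{a,a''\}$, so that the square provides a genuine shortcut rather than merely a lateral excursion in $C$.
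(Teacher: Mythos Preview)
Your setup is sound---the cycle $\gamma$ exists and $\mathcal{R}_4$ applies---but the minimality argument at the end does not close, and you essentially acknowledge this.  The problem is that $v'',a''\in\hull_\Lambda(\gamma)$ gives you no control over where these vertices sit inside $C$ relative to the path $\alpha$.  At this point in the paper none of the later convexity results (Lemma~\ref{lem:linksconvex}, Corollary~\ref{lem:componentsconvex}) are available, so the $\Lambda$--hull of $\gamma$ could sprawl across $C$ in ways unrelated to $\alpha$.  You know $a''\in C\cap\lk(v)$, but there is no reason $d_C(a'',b)<d_C(a,b)$: the square $\{v,v''\}\join\{a,a''\}$ may simply produce a lateral step in $C$, not a shortcut toward $b$.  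The symmetric application at $bv'$ has the same defect, and there is no mechanism linking the two squares.  Invoking $\mathcal{R}_3$ on the square does not help either, since it only tells you about the $\Lambda$--hulls of the diagonals, not about $\Gamma$--distances in $C$.

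The paper's proof avoids all of this by not using the $\mathcal{R}_i$ conditions at all.  Instead it uses only the consequence that $\Gamma$ is \CFS\ (Theorem~\ref{weaklycfs}): take any vertex $b$ in the target component and any vertex $a$ in a different component, connect $\{a,\cdot\}$ to $\{b,\cdot\}$ by a path in $\diag(\Gamma)$, and look at the first step of this path that enters the $b$--component.  The square witnessing that step must have $\{v,v'\}$ as one diagonal, and the other diagonal vertex in the $b$--component is the desired common neighbor.  This is a two-line argument once you think in terms of $\diag(\Gamma)$ rather than trying to push a minimal-cycle contradiction through $\mathcal{R}_4$.
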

\begin{proof}
  Pick a component of $\Gamma\setminus \cut{v}{v'}$; let $b$ be a
  vertex in that component, and let $a$ be a vertex from a different
  component.   
By Theorem~\ref{weaklycfs} we can choose a geodesic $\{p_0,q_0\},\dots,\{p_n,q_n\}$ in $\diag(\Gamma)$
such that $a\in\{p_0,q_0\}$ and $b\in\{p_n,q_n\}$.
Let $i_0$ be the least index such that $\{p_{i_0},q_{i_0}\}$ contains
a vertex, say $p_{i_0}$, in the same component of  $\Gamma\setminus \cut{v}{v'}$ as $b$.

If $i_0=0$ then $q_{i_0}=a$ and there is a square with one diagonal
containing vertices in different components of the cut.
This is only possible if the cut $\cut{v}{v'}$ is a cut pair $\{v,v'\}$
and the square is $\{a,p_0\}\join \{v,v'\}$, in which case $p_0$ is a
common neighbor of $v$ and $v'$ in the $b$--component.

If $i_0>0$ then $\{p_{i_0-1},q_{i_0-1}\}\join\{p_{i_0},q_{i_0}\}$ is a
square with $p_{i_0}$ in the $b$--component 
and with $p_{i_0-1}$ and $q_{i_0-1}$ non-adjacent vertices that are
both adjacent to $p_{i_0}$, but neither of which are in the same component
of $\Gamma\setminus \cut{v}{v'}$ as $p_{i_0}$. Then
$\{p_{i_0-1},q_{i_0-1}\}=\{v,v'\}$.
\end{proof}

\begin{lem}\label{lem:cutsconvex}
  Let $\Gamma$ be an incomplete, triangle-free graph with no separating clique that
  admits a FIDL--$\Lambda$.
  Let $\cut{v}{v'}$ be a cut of $\Gamma$.
  Then $\Lambda$ contains an edge between $v$ and $v'$.
\end{lem}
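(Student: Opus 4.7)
The plan is to exhibit an induced square with $\{v,v'\}$ as a diagonal and then to use condition $\mathcal{R}_3$ to force the $\Lambda$-distance from $v$ to $v'$ to be one. First, apply Lemma~\ref{lem:componentscontaincommonneighbors} to choose common neighbors $x,y$ of $v,v'$ in distinct components of $\Gamma\setminus\cut{v}{v'}$. These lie outside $\cut{v}{v'}$, and they are $\Gamma$-non-adjacent, since an edge $xy$ would lie in $\Gamma\setminus\cut{v}{v'}$ and place them in a common component. Since $v,v'$ are also $\Gamma$-non-adjacent (by definition in the cut pair case, and by triangle-freeness in the cut triple case, where both are adjacent to $c$), the set $\{v,v',x,y\}$ spans an induced square $\{v,v'\}\join\{x,y\}$.

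Next, because $\Gamma$ is bipartite (a consequence of $\mathcal{R}_1$, $\mathcal{R}_2$, and $\mathcal{F}_1$) and $v,v'$ share a common neighbor $x$, the vertices $v$ and $v'$ lie in the same color class; without loss of generality, both are red. By $\mathcal{R}_1$, $\Lambda_\red$ is a tree, so the $\Lambda$-geodesic $P=[v,v']_\Lambda$ is well-defined, and the task reduces to showing that $P$ has length one. Suppose for contradiction that $P$ has length at least two, and pick an interior vertex $w$ of $P$. Condition $\mathcal{R}_3$ applied to the square $\{v,v'\}\join\{x,y\}$ yields $\hull_\Lambda\{v,v'\}\join\hull_\Lambda\{x,y\}\subset\Gamma$, and in particular $w$ is $\Gamma$-adjacent to both $x$ and $y$.

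Since $w\in\Lambda_\red$ is red while $c$ (in the cut triple case) is blue---being $\Gamma$-adjacent to the red vertex $v$---we have $w\notin\cut{v}{v'}$. The edges $wx$ and $wy$ then place $w$ in the same component of $\Gamma\setminus\cut{v}{v'}$ as both $x$ and $y$, contradicting that $x$ and $y$ lie in different components. Hence $P$ has length one, meaning $v\edge v'$ is a $\Lambda$-edge.

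The main obstacle to watch is the bookkeeping of which vertices lie inside $\cut{v}{v'}$; specifically, one must verify in the cut triple case that the extra vertex $c$ has the opposite color from $v$ and $v'$ and therefore cannot coincide with the red interior vertex $w$, and that $x,y$ themselves avoid the cut so that edges from $w$ into them genuinely certify a common component. Once these color and membership checks are in place, the application of $\mathcal{R}_3$ is straightforward.
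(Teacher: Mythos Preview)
Your proof is correct and follows essentially the same approach as the paper: exhibit the square $\{v,v'\}\join\{x,y\}$ from Lemma~\ref{lem:componentscontaincommonneighbors}, apply $\mathcal{R}_3$ to force every vertex of $\hull_\Lambda\{v,v'\}$ to be a common neighbor of $x$ and $y$, and observe that any such vertex other than $v,v'$ would place $x$ and $y$ in the same component of $\Gamma\setminus\cut{v}{v'}$. The only cosmetic difference is that the paper rules out the third cut vertex $c$ via triangle-freeness (a triangle on $v,c,x$) rather than your bipartite-coloring argument, and phrases the conclusion directly rather than by contradiction.
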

\begin{proof}
  By Lemma~\ref{lem:componentscontaincommonneighbors}, it is possible
  to choose vertices $a$ and $b$ that are common neighbors of $v$ and
  $v'$ and contained in different components of $\Gamma\setminus\cut{v}{v'}$.
  Every common neighbor of $a$ and $b$ must lie in the cut, so the
  triangle-free condition implies the only common neighbors of $a$ and
  $b$ are $v$ and $v'$. 
  
  Now, $\{a,b\}\join\{v,v'\}$ is a square, so $\mathcal{R}_3$ implies
  $\{a,b\}\join\hull_\Lambda\{v,v'\}\subset\Gamma$, so
    $\hull_\Lambda\{v,v'\}=\{v,v'\}$, so there is a $\Lambda$--edge
    between $v$ and $v'$.
  \end{proof}
  
\begin{lem}\label{lem:linksconvex}
  Let $\Gamma$ be an incomplete, triangle-free graph with no separating clique that
  admits a FIDL--$\Lambda$.
  The link of every vertex is $\Lambda$--convex.
\end{lem}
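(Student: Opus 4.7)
The plan is to argue by contradiction, applying $\mathcal{R}_4$ to a cycle built from $v$ and a would-be violating $\Lambda$-geodesic. Suppose $\lk(v)$ is not $\Lambda$-convex. Since each $\Lambda$-component is one of the two trees spanning a color class (by $\mathcal{R}_1$ and $\mathcal{F}_1$), and $\lk(v)$ lies entirely in the color class opposite to $v$'s, every pair of vertices in $\lk(v)$ lies in a common $\Lambda$-component. Hence there exist $a,b\in\lk(v)$ with $[a,b]_\Lambda\not\subset\lk(v)$; choose such a pair with $n:=d_\Lambda(a,b)$ minimal. Then $n\geq 2$ (for $n=1$ yields $[a,b]_\Lambda=\{a,b\}\subset\lk(v)$), and by minimality no interior vertex of $[a,b]_\Lambda=(a=c_0,c_1,\dots,c_n=b)$ lies in $\lk(v)$: otherwise some pair $(a,c_i)$ with $0<i<n$ would give a strictly shorter violating geodesic.

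Next, I would form the cycle $\gamma$ in $\Theta$ by going $a\to v\to b$ along the two $\Gamma$-edges and returning via the reversed $\Lambda$-geodesic $b,c_{n-1},\dots,c_1,a$. Bipartiteness of $\Gamma$ ensures that $v$ has the opposite color from every $c_i$, so $v\notin\{a,c_1,\ldots,c_{n-1},b\}$, while the $c_i$ (together with $a,b$) are distinct by embeddedness of a tree geodesic. Thus $\gamma$ is a genuinely embedded cycle in $\Theta$.

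Applying $\mathcal{R}_4$ to the $\Gamma$-edge $av$ of $\gamma$ produces a square $\{a,a'\}\join\{v,v'\}$ in $\Gamma$ with $a',v'\in\hull_\Lambda(\gamma)$. Since $\Lambda=\Lambda_\red\sqcup\Lambda_\blue$, the convex hull decomposes by color class as $\hull_\Lambda(\gamma)=\hull_{\Lambda_\red}(V(\gamma)\cap\Lambda_\red)\sqcup\hull_{\Lambda_\blue}(V(\gamma)\cap\Lambda_\blue)$; here $v$ is the only vertex of its color in $V(\gamma)$, and the blue vertices $a,c_1,\ldots,c_{n-1},b$ are already $\Lambda_\blue$-convex as a tree geodesic. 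So $\hull_\Lambda(\gamma)=\{v\}\cup\{a,c_1,\ldots,c_{n-1},b\}$. But the square forces $v'$ to share $v$'s color and differ from $v$, which is impossible---yielding the desired contradiction.

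The main difficulty is to confirm that $\mathcal{R}_4$ applies as stated to the mixed $\Theta$-cycle $\gamma$ we construct (rather than only to $\Gamma$-cycles) and that the hull decomposition is exactly as described; both should follow directly from the bipartite structure of $\Gamma$ together with $\mathcal{R}_1$.
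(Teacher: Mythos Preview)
Your argument hinges on exactly the point you flag as the ``main difficulty'': you apply $\mathcal{R}_4$ to a cycle that uses $\Lambda$--edges, not a cycle in $\Gamma$. Your hull computation is fine---$\hull_\Lambda(\gamma)$ really is $\{v\}\cup\{c_0,\dots,c_n\}$, and that part does follow from the bipartite structure together with $\mathcal{R}_1$---but the assertion that $\mathcal{R}_4$ applies to mixed $\Theta$--cycles does \emph{not} follow from bipartiteness and $\mathcal{R}_1$. Throughout the paper $\mathcal{R}_4$ is applied to, and verified only on, $\Gamma$--cycles (see the proofs of Proposition~\ref{prop:splitting} and Theorem~\ref{thmConingDecompOneWay}, where the cycle's neighbours of a vertex are taken in $\lk_\Gamma$, not $\lk_\Theta$). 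Within this framework your cycle $\gamma$ is not admissible, and the step ``apply $\mathcal{R}_4$ to the edge $a\edge v$ of $\gamma$'' is unjustified.

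The paper's proof fills precisely this gap by replacing the $\Lambda$--arc $a=c_0,\dots,c_n=b$ with a genuine $\Gamma$--path. Since every $\Lambda$--edge $\{c_i,c_{i+1}\}$ is the diagonal of some square $\{c_i,c_{i+1}\}\join\{d_i,d_i'\}$ in $\Gamma$ (and none of the $d$'s equal $v$, as the interior $c_i\notin\lk(v)$), one weaves a $\Gamma$--cycle $b,v,a=c_0,d_0,c_{j_0},d_{j_0},\dots$ using a greedy rule that always jumps to the farthest $c_j$ adjacent to the current $d$, guaranteeing no repetitions. This $\Gamma$--cycle still has blue $\Lambda$--hull exactly $\{c_0,\dots,c_n\}$, but its red hull now contains the various $d$'s, so your ``no room for $v'$'' contradiction is unavailable. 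Instead, since $a'$ must be blue, lie in $\lk(v)$, and lie in $\{c_0,\dots,c_n\}$, one gets $a'=b$; then $\mathcal{R}_3$ applied to the square $\{a,b\}\join\{v,v'\}$ forces $\{v\}\join\hull_\Lambda\{a,b\}\subset\Gamma$, the desired contradiction. The construction of a $\Gamma$--cycle with controlled blue hull, and the finishing appeal to $\mathcal{R}_3$, are the pieces missing from your approach.
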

\begin{proof}
  Suppose not.
  Then there exist $a,b,v$ with $a,b\in\lk(v)$ such that
  $\lk(v)\cap \hull_\Lambda\{a,b\}=\{a,b\}$ and $a$ and $b$ are not
  adjacent in $\Lambda$. 
  Assume $v\in\Lambda_\red$ and $a,b\in\Lambda_\blue$. 
Let  $a=c_0,c_1,\dots,c_n=b$, $n\geq 2$, be the vertices of
$[a,b]_\Lambda$.
Each pair $\{c_i,c_{i+1}\}$ is the
  diagonal of some square $\{c_i,c_{i+1}\}\join\{d_i,d_i'\}$ of
  $\Gamma$.
  None of the $d_i$ and $d_i'$ equal $v$, since the $c_i$
  for $i\neq 0,n$ are not in $\lk(v)$.

We build a cycle $\gamma$ as follows: start with $b$, $v$,
$a=c_0$, and $d_0$.
Next add $c_{j_0}$ where $j_0\geq 1$ is the largest index such that $c_{j_0}$ is
adjacent to $d_0$.  Then add $d_{j_0}$.
Continue, where, having most recently added $d_i$ we next add $c_j$ such that $j$ is the maximal index with $d_i$ adjacent to $c_j$.  
If $j<n$ then add $d_j$ and repeat.
The point is that while $d_{j}$ is adjacent to $c_{j+1}$, the previous $d_i$
that occur in $\gamma$ are not adjacent to any $c_k$ for $k>j$, so
we guarantee that no $d$ is repeated in $\gamma$.

Thus, $\gamma$ is a cycle in $\Gamma$ with
$\hull_{\Lambda_\blue}(\gamma)=\hull_{\Lambda}\{a,b\}=\{c_0,\dots,c_n\}$.
Condition $\mathcal{R}_4$ implies there exists a square
$\{a,a'\}\join\{v,v'\}$ with
$a',v'\in\hull_\Lambda\gamma$.
However, $a$ and $b$ are the only vertices of
$\lk(v)\cap\hull_\Lambda\gamma$, so $a'=b$.
Condition $\mathcal{R}_3$ implies
$\{v\}\join\hull_\Lambda\{a,b\}\subset\Gamma$, contradicting $\hull_\Lambda\{a,b\}\not\subset\lk(v)$.
\end{proof}

\begin{cor}\label{lem:componentsconvex}
  Let $\cut{v}{v'}$ be a cut of $\Gamma$, and let $\Gamma'$ be a connected
  component of $\Gamma\setminus \cut{v}{v'}$.
  Let $\bar\Gamma':=\Gamma'\cup \cut{v}{v'}$.
  The intersection of each component of $\Lambda$ with $\bar\Gamma'$ is $\Lambda$--convex.
\end{cor}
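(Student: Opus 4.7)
The plan is to argue by contradiction. Suppose $S:=\Lambda_\kappa\cap\bar\Gamma'$ is not $\Lambda$-convex, where $\Lambda_\kappa\in\{\Lambda_\red,\Lambda_\blue\}$. Then there are $x,y\in S$ in the same $\Lambda$-component with $[x,y]_\Lambda$ leaving $\bar\Gamma'$; minimizing the length of $[x,y]_\Lambda=c_0,c_1,\dots,c_n$ we may assume every interior vertex $c_1,\dots,c_{n-1}$ lies in some component of $\Gamma\setminus\cut{v}{v'}$ other than $\Gamma'$.

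The technical engine is a \emph{crossing observation}: any $\Lambda$-edge $\{u,u'\}$ whose endpoints sit in two distinct non-cut components of $\Gamma\setminus\cut{v}{v'}$ is a diagonal of a square in $\Gamma$ whose other diagonal is exactly $\{v,v'\}$. The reason: any length-$2$ $\Gamma$-path from $u$ to $u'$ must cross the cut, so both common $\Gamma$-neighbors of $u,u'$ lie in $\cut{v}{v'}$; and the only non-adjacent pair in the cut is $\{v,v'\}$. By Lemma~\ref{lem:cutsconvex} the vertices $v,v'$ share a color---say red---so the endpoints $u,u'$ of any crossing $\Lambda$-edge are blue and lie in $L:=\lk(v)\cap\lk(v')$.

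For $\kappa=\red$: no $\Lambda_\red$-edge can cross (crossings are blue-blue). Consider the first edge $\{c_{i-1},c_i\}$ that exits $\bar\Gamma'$. If $c_{i-1}\in\Gamma'$ we have a forbidden crossing, so $c_{i-1}\in\{v,v'\}$; WLOG $c_{i-1}=v$. Once the path enters some $\Gamma_j\neq\Gamma'$ at $c_i$ it is trapped there: another non-cut crossing is forbidden; a return to $v$ would repeat a vertex in the tree; and a continuation to $v'$ would produce, together with the Lemma~\ref{lem:cutsconvex} edge $\{v,v'\}$, a triangle $\{v,c_i,v'\}$ inside the tree $\Lambda_\red$. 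Hence $c_n=y$ remains in $\Gamma_j$, contradicting $y\in\bar\Gamma'$.

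For $\kappa=\blue$: crossing $\Lambda_\blue$-edges are allowed, but applying the crossing observation to the first and last edges of $[x,y]_\Lambda$ that leave and re-enter $\bar\Gamma'$ forces $x,y\in L$, whereupon Lemma~\ref{lem:linksconvex} places the whole geodesic in the convex subtree $L$. To see that the geodesic also stays in $\bar\Gamma'$, I use connectivity of $\Gamma'$: if there is a red $a\in\Gamma'$ adjacent to both $x$ and $y$ in $\Gamma$ (noting $a,v$ are both red, hence non-adjacent), then $\{x,y\}\join\{a,v\}$ is a square in $\Gamma$, and $\mathcal{R}_3$ yields $\hull_\Lambda\{x,y\}\subset\lk(a)\cap\lk(v)\subset\bar\Gamma'$, because $a\in\Gamma'$ forces $\lk(a)\subset\Gamma'\cup\cut{v}{v'}$. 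The main obstacle is to produce such a pivot $a$ when $x,y$ are at $\Gamma'$-distance greater than $2$; here the plan is to iterate the argument along a $\Gamma'$-path from $x$ to $y$, passing through intermediate common neighbors of $v,v'$ in $\Gamma'$ supplied by Lemma~\ref{lem:componentscontaincommonneighbors}, so that each consecutive pair on the path admits a pivot and the conclusion propagates.
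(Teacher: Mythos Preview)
Your red case is correct, modulo a small imprecision: the ``triangle $\{v,c_i,v'\}$'' should really be a cycle, since the $\Lambda_\red$-segment from $v$ through $\Gamma_j$ to $v'$ may have length greater than $2$. The contradiction with the tree structure of $\Lambda_\red$ is the same either way.

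The blue case has a genuine gap. You correctly deduce $x,y\in L=\lk(v)\cap\lk(v')$ and hence $[x,y]_\Lambda\subset L$ by Corollary~\ref{cor:linkintersectionconvex}. But $L$ meets \emph{every} component of $\Gamma\setminus\cut{v}{v'}$ (that is exactly what Lemma~\ref{lem:componentscontaincommonneighbors} says), so this does not confine the geodesic to $\bar\Gamma'$. Your proposed fix---iterating through ``intermediate common neighbors of $v,v'$ in $\Gamma'$ supplied by Lemma~\ref{lem:componentscontaincommonneighbors}''---does not work as stated: that lemma supplies \emph{one} such vertex per component, not a chain, and there is no reason a $\Gamma'$-path from $x$ to $y$ should visit vertices of $L$ along the way. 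Your pivot square $\{p,q\}\join\{a,v\}$ also does not survive the iteration, since intermediate blue vertices on a generic $\Gamma'$-path need not lie in $\lk(v)$.

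The paper's proof sidesteps all of this. For arbitrary $a,b\in\bar\Gamma'$ of the same $\Lambda$-color, take an embedded $\Gamma$-path $a=c_0,c_1,\dots,c_n=b$ with interior vertices in $\Gamma'$. For each odd $i$ the vertex $c_i$ lies in $\Gamma'$, so $\lk(c_i)\subset\bar\Gamma'$; by Lemma~\ref{lem:linksconvex}, $[c_{i-1},c_{i+1}]_\Lambda\subset\lk(c_i)\subset\bar\Gamma'$. Concatenating these segments gives a $\Lambda$-path from $a$ to $b$ inside $\bar\Gamma'$, and in a tree any such path contains the geodesic. This argument is color-blind---it handles $\Lambda_\red$ and $\Lambda_\blue$ simultaneously, uses neither the crossing observation nor Lemma~\ref{lem:cutsconvex}---and it is precisely the natural completion of your ``iterate along a $\Gamma'$-path'' idea, just with Lemma~\ref{lem:linksconvex} applied directly at the odd-indexed pivots rather than Lemma~\ref{lem:componentscontaincommonneighbors} and $\mathcal{R}_3$.
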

\begin{proof}
For every  pair of vertices $a,b\in\bar\Gamma'$, there exists an embedded
  $\Gamma$--path $c_0=a,\dots,c_n=b$ such that $c_i\in\Gamma'$ for all
  $i\neq 0,n$.
  Suppose $a$ and $b$ are in the same component of $\Lambda$.
  Then $n$ is
  even and for $i$ odd, $\lk(c_i)\subset \bar\Gamma'$ is $\Lambda$--convex and contains
  $c_{i-1}$ and $c_{i+1}$, so
  $[c_0,c_2]_\Lambda+\cdots+[c_{n-2},c_n]_\Lambda$ is a
  $\Lambda$--path from $a$ to $b$ with vertices in
  $\bigcup_{i \text{ odd}}\lk(c_i)\subset \bar\Gamma'$.
\end{proof}

\begin{cor}\label{cor:linkintersectionconvex}
     For any two vertices $a,b\in\Gamma$, $\lk(a)\cap\lk(b)$ is $\Lambda$--convex.
 \end{cor}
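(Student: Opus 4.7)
The plan is to deduce this directly from Lemma~\ref{lem:linksconvex} together with the forest structure of $\Lambda$. Since Lemma~\ref{lem:linksconvex} already gives that $\lk(a)$ and $\lk(b)$ are individually $\Lambda$--convex, the corollary reduces to the observation that the intersection of two $\Lambda$--convex sets is $\Lambda$--convex.

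First I would invoke condition $\mathcal{R}_1$, which makes $\Lambda=\Lambda_\red\sqcup\Lambda_\blue$ a disjoint union of two trees. Since $\Gamma$ is bipartite with the red/blue coloring, for any vertex $v\in\Gamma$ the link $\lk(v)$ is contained entirely in the single component of $\Lambda$ whose color is opposite to that of $v$. In particular, $\lk(a)\cap\lk(b)$ lies in one component of $\Lambda$: if $a$ and $b$ have the same color, both links sit in the opposite-color tree; if they have different colors, the two links sit in disjoint components, so their intersection is empty and the statement holds trivially.

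In the remaining case, both $\lk(a)$ and $\lk(b)$ are convex subsets of a single tree, and the intersection of two convex subtrees of a tree is convex: given $x,y\in\lk(a)\cap\lk(b)$, the unique $\Lambda$--geodesic $[x,y]_\Lambda$ lies in $\lk(a)$ by $\Lambda$--convexity of $\lk(a)$ and analogously in $\lk(b)$, hence in the intersection. So $\hull_\Lambda(\lk(a)\cap\lk(b))=\lk(a)\cap\lk(b)$.

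I do not anticipate any real obstacle here; the only subtlety is the case split by color, which is handled immediately by the bipartite structure imposed by $\mathcal{R}_1$, $\mathcal{R}_2$, and $\mathcal{F}_1$.
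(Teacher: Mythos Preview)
Your proposal is correct and is essentially the paper's own argument spelled out in more detail: the paper just says that for $c,d\in\lk(a)\cap\lk(b)$, Lemma~\ref{lem:linksconvex} puts $[c,d]_\Lambda$ inside both $\lk(a)$ and $\lk(b)$. Your color case-split is extra bookkeeping the paper leaves implicit, but the core step is identical.
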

 \begin{proof}
For $c,d\in\lk(a)\cap\lk(b)$, 
Lemma~\ref{lem:linksconvex} says $[c,d]_\Lambda$  is contained in both $\lk(a)$ and
$\lk(b)$.
 \end{proof}

 Here are some consequences of these convexity results:
 \begin{prop}\label{nolongcycles}
    Let $\Gamma$ be an incomplete, triangle-free graph with no separating clique that
    admits a FIDL--$\Lambda$.
    Then every cycle of $\Gamma$ has even length, every cycle of
    length greater than 6 has a 1 or 2--chord, and an induced cycle of
    length 6 occurs only as
the rim of a bicycle wheel subgraph of $\Gamma$.
  \end{prop}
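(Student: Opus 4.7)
The three assertions split naturally and draw on different pieces of the setup, so I would handle them in sequence.

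The parity claim is immediate: as noted when $\mathcal{R}_1,\mathcal{R}_2,\mathcal{F}_1$ are introduced, these conditions force $\Gamma$ to be bipartite with bicoloring $\red/\blue$, and every cycle in a bipartite graph has even length.

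For the chord claim on cycles of length greater than~6, I would argue by contradiction. Suppose $\gamma=v_0,\dots,v_{2k-1}$ is an induced cycle (no $1$--chord) of length $2k\geq 8$ with no $2$--chord. Apply $\mathcal{R}_4$ to the edge $v_0\edge v_1$ to obtain a square $\{v_0,a\}\join\{v_1,b\}$ with $a,b\in\hull_\Lambda(\gamma)$. Since $\gamma$ is induced and triangle--free, any $\gamma$--vertex adjacent to $v_1$ other than $v_0$ or $v_2$ would be a $1$--chord; hence if $a\in\gamma$ then $a=v_2$, and symmetrically if $b\in\gamma$ then $b=v_{2k-1}$. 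If both, the square requires the edge $v_2\edge v_{2k-1}$, which is a $1$--chord of $\gamma$ because both arcs have length $\geq 3$, contradiction. So at least one of $a,b$ lies outside $\gamma$; the no--$2$--chord hypothesis restricts how such an external vertex can attach to $\gamma$ (its neighbors on $\gamma$ lie in a narrow window around the edge). Combining this with $\mathcal{R}_3$, which forces $\hull_\Lambda\{v_0,a\}\join\hull_\Lambda\{v_1,b\}\subset\Gamma$, and with the $\Lambda$--convexity of links (Lemma~\ref{lem:linksconvex}) and of link intersections (Corollary~\ref{cor:linkintersectionconvex}), I aim to propagate the square along $\gamma$ and force either a $1$-- or a $2$--chord, contradicting the standing assumption.

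For the length--$6$ case, let $\gamma=c_1,d_1,c_2,d_2,c_3,d_3$ be an induced $6$--cycle and apply $\mathcal{R}_4$ to each of its six edges. Because $\gamma$ is induced, each auxiliary vertex produced by $\mathcal{R}_4$ is either one of the two $\gamma$--neighbors of the relevant endpoint or lies outside $\gamma$; the $\Lambda$--hull $\hull_\Lambda(\gamma)$ splits as $\hull_{\Lambda_\red}\cup\hull_{\Lambda_\blue}$ of the monochromatic parts of $\gamma$. Using $\mathcal{R}_3$ on successive squares together with the convexity of links and link--intersections, I would show that the external contributions to the six squares can be pooled into a single pair of vertices $x,y\notin\gamma$ with $x$ adjacent to every $c_i$, $y$ adjacent to every $d_i$, and, because $\{c_i,x\}\join\{d_i,y\}$ is a square, $x\edge y$. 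That is the bicycle wheel structure.

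The main obstacle is clearly the induced length--$\geq 8$ case: the external vertices guaranteed by $\mathcal{R}_4$ can sit in many positions a priori, and ruling out all configurations consistent with the no--$2$--chord hypothesis requires an iterated use of $\mathcal{R}_3$ together with the convexity lemmas from the previous section to eventually force some edge or length--two path to become a genuine chord of $\gamma$. The length--$6$ case is more combinatorial: the chief difficulty there is identifying the hub vertices $x,y$ and verifying their universal adjacency to the $c_i$'s and $d_j$'s, rather than producing a contradiction.
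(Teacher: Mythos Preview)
Your parity argument is fine and matches the paper. The rest has a genuine gap.

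For cycles of length $\geq 8$ you apply $\mathcal{R}_4$ to one edge, observe that the auxiliary vertices $a,b$ lie in $\hull_\Lambda(\gamma)$, and then write ``I aim to propagate the square along $\gamma$ and force either a $1$-- or a $2$--chord.'' This is an intention, not an argument. The difficulty you are glossing over is exactly that $\hull_\Lambda(\gamma)$ can be much larger than $\gamma$, so the vertices $a,b$ produced by $\mathcal{R}_4$ are a priori unconstrained beyond the narrow-window observation you make about their $\gamma$--neighbors. Nothing you have written controls the shape of the hull, and applying $\mathcal{R}_3$ to the square $\{v_0,a\}\join\{v_1,b\}$ just produces another join of hulls, compounding the problem rather than resolving it.

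The paper's key idea, which you are missing, is to work directly in the tree $\Lambda$: concatenate the $\Lambda$--geodesics $[c_0,c_2]_\Lambda+[c_2,c_4]_\Lambda+\cdots+[c_{n-2},c_0]_\Lambda$ to get a loop in a tree, which must be degenerate (every edge crossed an even number of times). Link convexity puts $[c_{2m},c_{2m+2}]_\Lambda\subset\lk(c_{2m+1})$, and the no--$2$--chord hypothesis forces nonconsecutive odd links to be disjoint, so each geodesic segment can only share edges with its two neighbors. The parity constraint then produces a single vertex $x\in\lk(c_{2m-1})\cap\lk(c_{2m+1})\cap\lk(c_{2m+3})$, giving $d_\Gamma(c_{2m-1},c_{2m+3})=2$, which is a $2$--chord unless $n=6$. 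This is what pins down the hull; your $\mathcal{R}_4$ approach never gets this control.

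Your length--$6$ sketch has the same problem: you assert the six squares ``can be pooled into a single pair of vertices $x,y$'' without saying how. The paper obtains $x$ and $y$ from the degenerate-loop argument above (run once for each color), and then applies $\mathcal{R}_4$ not to $\gamma$ but to the auxiliary cycle $y,c_0,c_1,x,c_3,c_4$, whose $\Lambda$--hull is small enough that the resulting square is forced to use $x$ and $y$ themselves, yielding $x\edge y$.
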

  \begin{proof}
    $\Gamma$ is bipartite, since its vertices are 2--colored
    $\red/\blue$ according to which component $\Lambda_\red$ or
    $\Lambda_\blue$ of $\Lambda$ they belong. Thus, $\Gamma$ has no
    odd cycles.
    Suppose  $\gamma:=c_0,\,c_1,\,\dots,\,c_{n-1}$ is a cycle of
    length $n>4$ with no 1 or 2--chords. 
    We always take subscripts
    modulo $n$, without further comment.

Construct a $\Lambda$--loop at $c_0$ by taking $[c_0,c_2]_\Lambda+[c_2,c_4]_\Lambda+\cdots+[c_{n-2},c_n]_\Lambda$.
    This is a loop in a tree, so it is degenerate.
    In particular, each edge is crossed an even number of times.
    By Lemma~\ref{lem:linksconvex}, $[c_{2m},c_{2m+2}]_\Lambda\subset \lk(c_{2m+1})$.
    Since the cycle has no 1 or 2--chords, for odd $j>i$ we have that $\lk(c_{i})$ and
    $\lk(c_{j})$ intersect only if $j=i+2$ or $i=1$ and $j=n-1$, so
    only $[c_{2m-2},c_{2m}]_\Lambda$ and $[c_{2m+2},c_{2m+4}]_\Lambda$ potentially share edges with 
    $[c_{2m},c_{2m+2}]_\Lambda$.
    Since a geodesic uses an edge once or not at all, 
    to have all of the edges of
    $[c_{2m},c_{2m+2}]_\Lambda$ crossed evenly in total by the loop we
    need that each edge of  $[c_{2m},c_{2m+2}]_\Lambda$  is also contained in exactly one of $[c_{2m-2},c_{2m}]_\Lambda$ and $
    [c_{2m+2},c_{2m+4}]_\Lambda$.
    Thus, there is $x\in [c_{2m},c_{2m+2}]_\Lambda$ such that
    $[c_{2m},x]_\Lambda=[c_{2m-2},c_{2m}]_\Lambda\cap
    [c_{2m},c_{2m+2}]_\Lambda$ and $[x,c_{2m+2}]_\Lambda=[c_{2m},c_{2m+2}]_\Lambda \cap
    [c_{2m+2},c_{2m+4}]_\Lambda$.
    Such an $x$ is in
    $\lk(c_{2m-1})\cap\lk(c_{2m+1})\cap\lk(c_{2m+3})$.
    But then $d_\Gamma(c_{2m-1},c_{2m+3})=2$, so, since $\gamma$ has
    no 2--chords, $d_\gamma(c_{2m-1},c_{2m+3})=2$, so $n=6$. 

The same argument, reversing evens and odds, shows
there exists  $y\in\lk(c_0)\cap\lk(c_2)\cap\lk(c_4)$.    

Consider the cycle $\gamma':=y,c_0,c_1,x,c_3,c_4$.
    By condition $\mathcal{R}_4$, there is a square
    $\{c_0,v\}\join\{c_1,w\}$ with
    $v,w\in\hull_\Lambda(\gamma')=\{x,c_0,c_4\}\sqcup\{y,c_1,c_3\}$, so
    $v\in\{x,c_4\}$ and $w\in\{y,c_3\}$.
    But since $\gamma$ has no chords, $c_1$ is
    not adjacent to $c_4$ and $c_3$ is not adjacent to $c_0$, so $v=x$
    and $w=y$, implying $x$ and $y$ are adjacent. Thus, $\gamma$ is
    the rim of a bicycle wheel with hub $\{x,y\}$.
  \end{proof}

  \begin{prop}\label{prop:splitting}
    Let $\Gamma$ be an incomplete, triangle-free graph with no
    separating clique.
    Suppose $\Gamma$ has an uncrossed cut $\cut{a}{b}$.
    Let $\Gamma_i$ be the components of $\Gamma\setminus\cut{a}{b}$,
    and let $\bar\Gamma_i:=\Gamma_i\cup\cut{a}{b}$. 
    Then $\Gamma$ admits a FIDL--$\Lambda$ if and only if each
    $\bar\Gamma_i$ admits a FIDL--$\Lambda_i$ that contains an edge 
  $a\edge b$.
  \end{prop}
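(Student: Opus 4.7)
The proof is an iff, so I will prove the two directions separately.

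For $(\Rightarrow)$: given a FIDL--$\Lambda$ on $\Gamma$, Lemma~\ref{lem:cutsconvex} provides the edge $a\edge b\in\Lambda$. The idea is to restrict: set $\Lambda_i$ to be the induced subgraph of $\Lambda$ on the vertex set of $\bar\Gamma_i$. Corollary~\ref{lem:componentsconvex} says each color component of $\Lambda_i$ is $\Lambda$--convex, hence a subtree of the corresponding component of $\Lambda$, giving $\mathcal{R}_1$; $\mathcal{R}_2$ and $\mathcal{F}_1$ follow from the ``induced subgraph'' definition and from $\Lambda$ spanning $\Gamma$. Conditions $\mathcal{R}_3$ and $\mathcal{R}_4$ transfer from $\Lambda$ to $\Lambda_i$ because the same convexity implies that $\Lambda_i$--hulls of subsets of $\bar\Gamma_i$ coincide with their $\Lambda$--hulls, and squares and cycles in $\bar\Gamma_i$ are automatically squares and cycles in $\Gamma$.

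For $(\Leftarrow)$: given a FIDL--$\Lambda_i$ on each $\bar\Gamma_i$ containing $a\edge b$, the plan is to glue. The edge $a\edge b$ forces $a$ and $b$ into the same color class in each $\Lambda_i$, which I call red; in the 2--path case $\cut{a}{b}=\{a,b,c\}$ the vertex $c$ is $\Gamma$--adjacent to both $a$ and $b$, hence blue in every $\Lambda_i$, so the bicolorings agree on $\cut{a}{b}$. I set $\Lambda_\red:=\bigcup_i\Lambda_{\red,i}$, identified on $\{a,b\}$ together with the shared edge $a\edge b$; a direct vertex/edge count shows this is a tree. In the 2--path case set $\Lambda_\blue:=\bigcup_i\Lambda_{\blue,i}$ identified at $c$, again a tree. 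In the cut pair case the disjoint union $\bigsqcup_i\Lambda_{\blue,i}$ is a forest; since $a\edge b$ being a $\Lambda_i$--edge forces it to be a diagonal of some square in $\bar\Gamma_i$, each $\Gamma_i$ contains a common neighbor $n_i$ of $a$ and $b$, and I add cross-edges $n_i\edge n_{i+1}$ (valid $\Gamma^c$--edges, since $n_i$ and $n_{i+1}$ lie in different components of $\Gamma\setminus\{a,b\}$) to assemble a tree.

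It then remains to verify the FIDL conditions for the constructed $\Lambda$ in $\Gamma$. Conditions $\mathcal{R}_1$, $\mathcal{R}_2$, and $\mathcal{F}_1$ are immediate from the construction. For $\mathcal{R}_3$ and $\mathcal{R}_4$, every square or cycle of $\Gamma$ either lies in a single $\bar\Gamma_i$---in which case the $\Lambda$--geodesic between two of its vertices stays inside the subtree $\Lambda_i$ (a standard tree property), and the condition is inherited from $\Lambda_i$---or else it crosses components, which can only occur in the cut pair case. A cross-component square must have the form $\{x,y\}\join\{a,b\}$ with $x$ and $y$ in distinct sets $L_i:=\lk(a)\cap\lk(b)\cap\Gamma_i$; Corollary~\ref{cor:linkintersectionconvex} applied in each $\bar\Gamma_i$ says $L_i$ is $\Lambda_i$--convex, so the $\Lambda$--geodesic between $x$ and $y$ stays inside $\bigsqcup_iL_i=\lk(a)\cap\lk(b)$, yielding $\mathcal{R}_3$. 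The main obstacle will be $\mathcal{R}_4$ for cross-component cycles in the cut pair case: such a cycle must visit both $a$ and $b$ and decomposes into arcs in the various $\bar\Gamma_i$. I plan to close each arc by a short $\Gamma$--path through a common neighbor of $a,b$ in $\Gamma_i$, apply $\mathcal{R}_4$ of $\Lambda_i$ to the resulting cycle inside $\bar\Gamma_i$, and patch the resulting squares into a witness for the original cycle, where the convexity of $L_i$ again forces the produced hull vertices to lie in $\hull_\Lambda(\gamma)$.
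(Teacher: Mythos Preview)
Your overall architecture matches the paper's proof: restrict $\Lambda$ for the forward direction using Lemma~\ref{lem:cutsconvex} and Corollary~\ref{lem:componentsconvex}; for the converse glue the red trees along $a\edge b$, glue the blue trees along $c$ (2--path case) or via chosen common neighbors $n_i\in\lk(a)\cap\lk(b)\cap\Gamma_i$ (cut pair case), and then verify $\mathcal{R}_3$ and $\mathcal{R}_4$ using Corollary~\ref{cor:linkintersectionconvex} and an arc-closing trick. That is exactly what the paper does.

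There is one genuine slip. You assert that a square or cycle crossing components ``can only occur in the cut pair case''. This is false: in the 2--path case $\cut{a}{b}=\{a,b,c\}$ there are still cross-component squares $\{x_1,x_2\}\join\{a,b\}$ with $x_i\in\Gamma_i$, and cross-component induced cycles (which must pass through $a$ and $b$). The paper handles both cases at once by setting $c_i:=c$ for all $i$ in the 2--path case and then running the exact same $\mathcal{R}_3$/$\mathcal{R}_4$ argument you outline for the cut pair case; your plan extends verbatim once you make that substitution, so the gap is easy to close, but as written the 2--path case is not finished.

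Two smaller points on your $\mathcal{R}_4$ plan. First, when you close an arc with $a\edge n_i\edge b$, the resulting loop need not be a cycle (it might already contain $n_i$); the paper deals with this by first reducing to a shortest violating cycle, which is then induced, and splitting into the cases $c_i\in\gamma$ versus $c_i\notin\gamma$. Second, your justification ``convexity of $L_i$ forces the produced hull vertices to lie in $\hull_\Lambda(\gamma)$'' is not quite the right reason: what you actually need is that the \emph{specific} vertex $n_i$ you chose to splice in already lies in $\hull_\Lambda(\gamma)$, and that holds because $n_i$ is the unique $\Lambda_\blue$--gateway between $\bar\Gamma_i$ and the other pieces, hence lies on every $\Lambda_\blue$--geodesic between blue vertices of $\gamma$ in different components. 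Convexity of $L_i$ is used for $\mathcal{R}_3$, not here.
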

  \begin{proof}
    The `only if' direction follows easily from
    Lemma~\ref{lem:cutsconvex} and
    Corollary~\ref{lem:componentsconvex}. 

 For the converse, suppose $\Lambda_{\Gamma_i}:=\Lambda_{\Gamma_i,\red}\cup
  \Lambda_{\Gamma_i,\blue}$ contains an edge $a\edge b$ in
  $\Lambda_{\Gamma_i,\red}$ for all $i$.
  This is the only intersection of any of the $\red$ trees, since
  $\cut{a}{b}=\cap_i\bar\Gamma_i$, so
  $\Lambda_\red:=\cup_i\Lambda_{\Gamma_i,\red}$ is
  a tree.

  For $\Lambda_\blue$ there are two cases.
  If $\cut{a}{b}=\{a,b\}$ is a cut pair then the
  $\Lambda_{\Gamma_i,\blue}$ are disjoint.
  For each $i$, choose  $c_i\in\Gamma_i\cap\lk(a)\cap\lk(b)$, which
  exists by Lemma~\ref{lem:componentscontaincommonneighbors}.
Define 
  $\Lambda_\blue$ as the union of the $\Lambda_{\Gamma_i,\blue}$,
  together with a tree spanning the $c_i$.
   If $\cut{a}{b}=\{a,b,c\}$ is a
  2--path cut triple then $c$ is a vertex in all 
  $\Lambda_{\Gamma_i,\blue}$, so
  $\Lambda_\blue:=\cup_i\Lambda_{\Gamma_i,\blue}$
  is a tree. In this case, set each $c_i:=c$, so that we can write a
  common argument. 

  Let $\Lambda:=\Lambda_\red\sqcup\Lambda_\blue$.
  Condition $\mathcal{R}_1$ we arranged by constructing trees.
  Conditions $\mathcal{R}_2$, and $\mathcal{F}_1$ are immediate.
We check conditions $\mathcal{R}_3$ and $\mathcal{R}_4$.
The only interesting
 cases are squares and cycles that include vertices from different
 components of $\Gamma\setminus\cut{a}{b}$.

 Suppose $x_1\in\Gamma_1$ and $x_2\in\Gamma_2$ are the
 diagonals of a square.
Since $\cut{a}{b}$ is a cut, the square is $\{x_1,x_2\}\join\{a,b\}$.
 Since the only $\Lambda_\blue$ connection between $\Gamma_1$ and
 $\Gamma_2$ is through $c_1$ and $c_2$,  we have:
 \[\hull_\Lambda\{a,b\}\join\hull_\Lambda\{x_1,x_2\}=\{a,b\}\join
   (\hull_{\Lambda_{\Gamma_1,\blue}}\{x_1,c_1\}\cup\hull_{\Lambda_{\Gamma_2,\blue}}\{c_2,x_2\}\cup\hull_{\Lambda_\blue}\{c_1,c_2\})\]
 This join is indeed contained in $\Gamma$, because $x_i,c_i\in\lk_{\bar\Gamma_i}(a)\cap\lk_{\bar\Gamma_i}(b)$, and by
Corollary~\ref{cor:linkintersectionconvex},
$\lk_{\bar\Gamma_i}(a)\cap\lk_{\bar\Gamma_i}(b)$ is
$\Lambda_{\bar\Gamma_i}$--convex, so all of
$\hull_{\Lambda_{\Gamma_i,\blue}}\{x_i,c_i\}$ are common neighbors of $a$
and $b$, whereas $\hull_{\Lambda_\blue}\{c_1,c_2\}\subset
\lk(a)\cap\lk(b)$ by construction. 
Thus, $\mathcal{R}_3$ is satisfied.

If $\mathcal{R}_4$ is violated there is a shortest cycle $\gamma$
containing an edge $e$ that is not included in a square with vertices
in $\hull_\Lambda(\gamma)$.
We may assume such a cycle is induced, since if it has vertices that
are adjacent in $\Gamma$ and not in $\gamma$ we could use that edge to
cut $\gamma$ into two strictly shorter cycles whose union contains all
the edges of $\gamma$.
In particular, $e$ is in one of these shorter cycles, $\gamma'$.
But $\hull_\Lambda\gamma'\subset\hull_\Lambda\gamma$, so $e$ is not
included in a square with vertices in $\hull_\Lambda\gamma'$,
contradicting that $\gamma$ was a shortest counterexample. 

So, consider an induced cycle $\gamma$ that contains, without loss of
generality, vertices from $\Gamma_1$ and $\Gamma_2$. 
Since the cycle crosses the cut, it contains $a$ and $b$.
Since it is induced, either $\gamma\cap\bar\Gamma_1= a\edge c_1\edge b$, or
$c_1\notin\gamma$.
In the first case, both edges $a\edge c_1$ and $c_1\edge b$ are in the
square $\{c_1,c_2\}\join\{a,b\}$, and $c_2\in\hull_\Lambda(\gamma)$ since
the only $\Lambda_\blue$--connection between $\Gamma_1$ and $\Gamma_2$ is through $c_2$.
On the other hand, if $c_1\notin\gamma$ then there is a cycle $\gamma'$
made from
$\gamma\cap\bar\Gamma_1$ and the segment $a\edge c_1\edge b$ that is completely
contained in $\bar\Gamma_1$, so its edges all participate in squares
with vertices in $\hull_{\Lambda_{\Gamma_1}}(\gamma')$, which is
contained in $\hull_{\Lambda_{\Gamma}}(\gamma)$, since the vertex
$c_1=\gamma'\setminus\gamma$ is in the $\Lambda_\blue$--hull of
$\gamma$, being the only $\Lambda_\blue$--connection between
$\bar\Gamma_1$ and $\bar\Gamma_2$.
This shows that all $\bar\Gamma_1$--edges of $\gamma$ are contained in
a square with vertices in $\hull_\Lambda(\gamma)$.
  \end{proof}

\section{Satellite-dismantlability and the Coning Algorithm} \label{SecConing}
In this section we take a graph with a FIDL--$\Lambda$ apart and then put it back together again. 
\begin{thm}\label{satellitedismantling}
      Let $\Gamma$ be an incomplete, triangle-free graph with no separating clique that
      admits a FIDL--$\Lambda$.
      Then $\Gamma$ is satellite-dismantlable to a square through a
      sequence of graphs
      $\Gamma_0=\Gamma\supset\Gamma_1\supset\dots\supset\Gamma_n$ such that for
      all $i$, $\Gamma_i$ is incomplete, triangle-free with no separating clique and has
      $\Lambda\cap\Gamma_i$ as a FIDL--$\Lambda$, with the satellite vertex
      $\Gamma_i\setminus\Gamma_{i+1}$ being a leaf in  $\Lambda\cap\Gamma_i$.
    \end{thm}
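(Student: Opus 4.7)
The plan is to proceed by strong induction on $|V(\Gamma)|$. The base case $|V(\Gamma)|=4$ is immediate: the only incomplete, triangle-free graph on four vertices with no separating clique is a square.

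For the inductive step, the key preliminary claim is that every leaf $v$ of $\Lambda$ is a satellite in $\Gamma$ of its unique $\Lambda$--neighbor $w$, i.e., $\lk_\Gamma(v)\subseteq\lk_\Gamma(w)$. The no-separating-clique hypothesis forces every vertex of $\Gamma$ to have degree at least $2$ and every edge to lie on some cycle, hence by $\mathcal{R}_4$ on a square. Given $b\in\lk_\Gamma(v)$, pick a square $\{v,v'\}\join\{b,b'\}$ containing the edge $v\edge b$; condition $\mathcal{R}_3$ gives $\hull_\Lambda\{v,v'\}\join\hull_\Lambda\{b,b'\}\subseteq\Gamma$. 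Since $v$ has unique $\Lambda$--neighbor $w$, the unique $\Lambda$--geodesic from $v$ to $v'$ begins $v,w,\dots$, so $w\in\hull_\Lambda\{v,v'\}$, and hence $w\edge b\in\Gamma$.

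I would then verify that for any leaf $v$ of $\Lambda$, the pair $(\Gamma',\Lambda'):=(\Gamma\setminus v,\;\Lambda\cap\Gamma')$ inherits conditions $\mathcal{R}_1$--$\mathcal{R}_4$ and $\mathcal{F}_1$, as well as triangle-freeness and incompleteness. The only nontrivial check is $\mathcal{R}_3,\mathcal{R}_4$: because $v$ is a leaf of the forest $\Lambda$, it does not belong to any $\Lambda$--hull of a subset of $\Gamma'$, and every cycle or square of $\Gamma'$ is one of $\Gamma$, so the conditions transfer directly.

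The main obstacle is preserving ``no separating clique''. A direct argument shows that any new separating clique $C\subseteq\Gamma'$ must contain $w$: otherwise $v$ would be a cut vertex of the connected graph $\Gamma\setminus C$, while all its neighbors lie in $\lk(w)$ and hence in the single component of $\Gamma\setminus C$ containing $w$, a contradiction. Triangle-freeness then forces $|C|\leq 2$, and $\{v\}\cup C$ is seen to produce an uncrossed cut of $\Gamma$ (crossed cuts cannot arise because $W_\Gamma$ is quasiisometric to a RAAG). I would therefore case split: if some leaf $v$ of $\Lambda$ can be removed without creating a separating clique, do so and apply the inductive hypothesis to $\Gamma\setminus v$; otherwise $\Gamma$ admits an uncrossed cut $\cut{a}{b}$, and Proposition~\ref{prop:splitting} decomposes $\Gamma$ into strictly smaller pieces $\bar\Gamma_i$, each admitting a FIDL--$\Lambda_i$ containing the edge $a\edge b$. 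By induction each $\bar\Gamma_i$ dismantles to a square $\{a,b\}\join\{c_i,c_i'\}$ through leaves of $\Lambda_i$; these same removals are valid in $\Gamma$, since a satellite in $\bar\Gamma_i$ remains a satellite in $\Gamma$ and a leaf of $\Lambda_i$ interior to $\bar\Gamma_i$ is a leaf of $\Lambda$ in $\Gamma$. Performing the dismantlings one piece at a time reduces $\Gamma$ to a graph of the form $K_{2,N}$ on $\{a,b\}\cup\bigcup_i\{c_i,c_i'\}$, which is finally collapsed to a single square by successively removing leaves of $\Lambda_\blue$, each a satellite of any other remaining blue vertex since in $K_{2,N}$ every blue vertex has link $\{a,b\}$.
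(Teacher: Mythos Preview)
Your argument that every $\Lambda$--leaf $v$ is a satellite of its $\Lambda$--neighbour $w$ is correct and in fact slightly slicker than the paper's version, and the inheritance of $\mathcal{R}_1$--$\mathcal{R}_4$, $\mathcal{F}_1$ under leaf--deletion is fine. The trouble is entirely in the second half, where you try to manufacture a good leaf by cutting $\Gamma$ along an uncrossed cut and reassembling dismantlings of the pieces.

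First, a small gap: when the separating clique $C$ of $\Gamma\setminus\{v\}$ has two vertices $\{w,u\}$, you assert that $\{v\}\cup C$ is a cut of $\Gamma$. For this to be a 2--path cut triple $\cut{v}{w}$ you need $u\in\lk(v)\cap\lk(w)$; you only know $u\in\lk(w)$, and nothing in your argument forces $u\in\lk(v)$.

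The more serious gap is the recombination step. Say $a,b$ are $\red$. Corollary~\ref{lem:componentsconvex} tells you that $\Lambda_\blue\cap\bar\Gamma_i$ is a convex subtree of $\Lambda_\blue$, but these subtrees are \emph{disjoint} (the cut contains no blue vertex when $\cut{a}{b}$ is a cut pair), and $\Lambda_\blue$ is connected, so there are $\Lambda_\blue$--edges joining vertices of different $\Gamma_i$'s. Hence a blue leaf of $\Lambda_1=\Lambda\cap\bar\Gamma_1$ can have a $\Lambda_\blue$--neighbour in $\Gamma_2$ and therefore fail to be a leaf of $\Lambda$ restricted to the current graph; your claim ``a leaf of $\Lambda_i$ interior to $\bar\Gamma_i$ is a leaf of $\Lambda$ in $\Gamma$'' is false for this colour. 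Similarly, nothing in the inductive hypothesis forces the terminal square of the dismantling of $\bar\Gamma_i$ to be $\{a,b\}\join\{c_i,c_i'\}$: as you strip leaves of $\Lambda_{i,\red}$ you might remove $a$ or $b$ before reaching the square, and then your gluing picture collapses. To push this strategy through you would need a relative version of the statement (dismantling that fixes the edge $a\edge b$) as your inductive hypothesis, together with a careful ordering of removals dictated by the global shape of $\Lambda_\blue$; neither is set up.

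The paper sidesteps all of this by proving directly that a good leaf exists in $\Gamma$ itself, without decomposing. It observes that if the vertex $\{v,v'\}$ is \emph{not} a cut vertex of the commuting graph $\Delta\subset\diag(\Gamma)$, then $\diag(\Gamma\setminus\{v\})$ is connected with full support, so $\Gamma\setminus\{v\}$ is strongly \CFS\ and hence has no separating clique by Lemma~\ref{cfsimpliesoneended}. If instead $\{v,v'\}$ is a cut vertex of $\Delta$, then $\cut{v}{v'}$ is a cut of $\Gamma$; one passes to a complementary component (which contains a $\Lambda$--leaf by convexity), and iterates, with the components shrinking strictly, until a leaf with $\{v,v'\}$ not a cut vertex of $\Delta$ is found.
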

\begin{proof}
  Our goal is to find a leaf $v$ of $\Lambda$ such that $v$ is a
  satellite in $\Gamma$ and $\Gamma_1:=\Gamma\setminus\{v\}$ has the
  desired properties.
  The proof is easy if $\Gamma$ is a suspension, so assume not.

  Identify $\Delta$ with its image in $\diag(\Gamma)$.
  This graph is connected, since $W_\Gamma$ and $A_\Delta$
  are one-ended.
  We claim that every vertex of $\diag(\Gamma)$ is adjacent to a
  vertex of $\Delta$.
  Suppose that $\{a,b\}\in\diag(\Gamma)\setminus\Delta$.
  By definition, $\{a,b\}$ is a diagonal of some square, so $a$ and $b$
  have at least two common neighbors.
  By Corollary~\ref{cor:linkintersectionconvex}, $\lk(a)\cap\lk(b)$ is
  $\Lambda$--convex, so it  contains a subtree of $\Lambda$ with at least one edge.
  Any such edge gives a vertex of $\Delta$ adjacent to $\{a,b\}$ in
  $\diag(\Gamma)$.
  Note that this shows $\Gamma$ is strongly \CFS.
  
  The vertex $v$ is a leaf of $\Lambda$ if and only if it has a unique neighbor $v'$ in
  $\Lambda$.
  Equivalently, $\{v,v'\}$ is the unique vertex of $\Delta$
  containing $v$.
  We claim that $v$ is a satellite of $v'$.
  Let $w\in\lk(v)$.
  Since $\{v,v'\}$ is the diagonal of some square,
  $|\lk(v)\cap\lk(v')|\geq 2$, so there exists $c\in(\lk(v)\cap\lk(v'))\setminus\{w\}$.
  Since the edge $v\edge c$ is not a separating clique, there is
  a path from $v'$ to $w$ that does not go through $v$ or $c$, so there is a cycle $\gamma$
  that goes 
  from $v$ to $c$ to $v'$ to $w$ and then back across the edge between
  $v$ and $w$.
  By $\mathcal{R}_4$ there is a square $\{v,x\}\join\{w,u\}$ with
  $x,u\in\hull_\Lambda(\gamma)$.
  But then $\mathcal{R}_3$ implies
  $\{w\}*\hull_\Lambda\{v,x\}\subset\Gamma$.
  Since $v$ was a $\Lambda$--leaf, $v'\in\hull_\Lambda\{v,x\}$, so $w$
  is adjacent to $v'$. Thus, $\lk(v)\subset\lk(v')$.

  The claim that $\Lambda\setminus\{v\}$ is a FIDL--$\Lambda$ is automatic; all of the defining properties are inherited from $\Lambda$.
  The key for this is that since only a leaf of $\Lambda$
  was deleted, convex sets in $\Lambda\setminus\{v\}$ are still
  convex in $\Lambda$.

  We must show it is possible to choose a leaf of $\Lambda$ such that
  $\Gamma\setminus\{v\}$ is not separated by a clique. 
First, let $v$ be a $\Lambda$--leaf and $\{v,v'\}$ the unique vertex
of $\Delta$ containing $v$, and suppose
 that $\{v,v'\}$ is not a cut vertex of $\Delta$.
  Then $\diag(\Gamma)\setminus \{v,v'\}$ consists of
  leaves of $\diag(\Gamma)$ that were connected only to $\{v,v'\}$,
  plus one additional component $\Omega$ containing
  $\Delta\setminus\{v,v'\}$.
  The reason for this is that $\Gamma$ being triangle-free implies
  $\diag(\Gamma)$ is triangle-free, so if $\{a,b\}$ is adjacent to $\{v,v'\}$,
  but is not a leaf of $\diag(\Gamma)$, then it is also adjacent to
  some other vertex $\{c,d\}$, which is adjacent to a vertex of
  $\Delta$, but not to $\{v,v'\}$.

  We further note that $\Omega$ has support $\Gamma\setminus\{v\}$.
  Since $\Gamma$ is not a suspension, 
  $v'$ is not a $\Lambda$--leaf, so it is contained in at least one other 
  vertex of $\Delta$, so
  $v'\in\supp(\Omega)$.
  For other vertices, we only worry about those appearing in 
  a leaf  $\{a,b\}$  of $\diag(\Gamma)$
  connected to $\{v,v'\}$.
  Since $\{v,v'\}$ is not a cut vertex
  of $\Delta$, $\{a,b\}\notin \Delta$.
  But $\Lambda$ spans $\Gamma$, so there is some vertex of $\Delta$
  with $a$ in its support, and likewise for $b$.
  
  Now consider $\diag(\Gamma\setminus\{v\})$.
  If $\{a,b\}$ is a leaf of $\diag(\Gamma)$ connected only to
  $\{v,v'\}$ then the only square of $\Gamma$ with $\{a,b\}$ as a
  diagonal is $\{a,b\}\join\{v,v'\}$, so $\{a,b\}$ is not the diagonal
  of a square in $\Gamma\setminus\{v\}$ and does not appear as a
  vertex of $\diag(\Gamma\setminus\{v\})$.
  Since $\{v,v'\}$ was the unique vertex of $\diag(\Gamma)$ containing
  $v$, $\diag(\Gamma\setminus\{v\})=\Omega$, which is connected, by hypothesis, and
  has full support in $\Gamma\setminus\{v\}$.
  Thus, $\Gamma\setminus\{v\}$ is strongly CFS, and Lemma~\ref{cfsimpliesoneended} says $\Gamma\setminus\{v\}$ has no
  separating clique. 

  Now we argue that there always exists a $\Lambda$--leaf not giving a
  cut vertex of $\Delta$, so we can find it and apply the previous argument.
Suppose the first chosen $\Lambda$--leaf $v$ does give a cut vertex
$\{v,v'\}$ of $\Delta$.
This implies that $\Gamma$ has a cut $\cut{v}{v'}$, since $W_\Gamma$ and
$A_\Delta$ split over a two-ended subgroup commensurable to  $\langle vv'\rangle$.
Let $v_0:=v$ and $v_0':=v'$.
By Corollary~\ref{lem:componentsconvex}, for each component $\Gamma'$
of $\Gamma\setminus\cut{v_0}{v_0'}$, the intersection of each component of
$\Lambda$ with $\bar\Gamma'$ is $\Lambda$--convex, so $\Gamma'$
contains at least one $\Lambda$--leaf $v_1$.
If $v_1$ uniquely appears as
$\{v_1,v_1'\}$ in $\Delta$ and $\{v_1,v_1'\}$ is not a cut vertex of
$\Delta$, we are done.
Otherwise, $\cut{v_1}{v_1'}$ is a cut of $\Gamma$.
Since $\Gamma$ has no crossing cuts,
$\cut{v_0}{v_0'}\setminus\cut{v_1}{v_1'}$ is a non-empty set contained in a
single component of $\Gamma\setminus\cut{v_1}{v_1'}$.
Choose a component $\Gamma''$ of
$\Gamma\setminus\cut{v_1}{v_1'}$ not containing $\cut{v_0}{v_0'}\setminus\cut{v_1}{v_1'}$ and repeat, always choosing a
complementary component of the most recent cut that does not contain the previous
cuts, so that the size of the components strictly decreases at each
step. 
If $v_{i+1}$ is the lone vertex in its component of
$\Gamma\setminus\cut{v_i}{v_i'}$ then it cannot be part of a cut, since
the cut would cross $\cut{v_i}{v_i'}$, so eventually this process
produces a $\Lambda$--leaf that does not appear in a cut vertex of
$\Delta$. 
\end{proof}
\begin{cor}
  If $\Gamma$ is an incomplete, triangle-free graph with no separating
  clique and no satellite vertex then $\Gamma$ does not admit a FIDL--$\Lambda$.
\end{cor}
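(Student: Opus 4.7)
The plan is to derive the corollary directly from Theorem~\ref{satellitedismantling} by contraposition. Assume toward contradiction that $\Gamma$ satisfies the stated hypotheses and that $\Gamma$ does admit a FIDL--$\Lambda$. Then the theorem produces a sequence $\Gamma=\Gamma_0\supset\Gamma_1\supset\dots\supset\Gamma_n$ in which $\Gamma_n$ is a square and each step removes a satellite vertex.

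Split into two cases according to $n$. If $n\geq 1$, then by the very definition of satellite-dismantlability, $\Gamma_0\setminus\Gamma_1$ is a satellite vertex of $\Gamma=\Gamma_0$, contradicting the assumption that $\Gamma$ has no satellite vertex. If instead $n=0$, then $\Gamma$ is itself a square; but opposite vertices of a square have identical links, so every vertex of a square is a twin (and hence a satellite) of its opposite vertex. This again contradicts the hypothesis.

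The only subtlety is recognizing that the $n=0$ edge case must be handled, since the theorem allows $\Gamma$ already to equal the terminal square. Once one observes that squares do contain satellite vertices (under the convention $\lk(v)\subset\lk(w)$ that makes twins satellites of each other), both cases close and the corollary is immediate. There is no real obstacle here; the statement is essentially a reformulation of the necessary condition proved in Theorem~\ref{satellitedismantling}.
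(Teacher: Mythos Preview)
Your argument is correct and is exactly the immediate deduction the paper intends; the paper states this corollary without proof, treating it as a direct consequence of Theorem~\ref{satellitedismantling}. Your handling of the $n=0$ case is appropriate and matches the paper's convention that twins are satellites.
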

\begin{cor}\label{cor:strongcfs}
  If $\Gamma$ is an incomplete, triangle-free graph with no separating
  clique and it admits a FIDL--$\Lambda$ then it is strongly \CFS.
\end{cor}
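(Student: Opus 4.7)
The plan is to extract the argument that already appears as a parenthetical observation in the proof of Theorem~\ref{satellitedismantling}. Since $\Gamma$ is incomplete with no separating clique, $W_\Gamma$ is one-ended, so the finite-index subgroup $A_\Delta$ is one-ended, which forces $\Delta$ to be connected with more than one vertex. Via the identification of $\Delta$ with its image in $\diag(\Gamma)$, we thus have a connected subgraph whose support, by condition $\mathcal{F}_1$, is all of $\Gamma$. The target is therefore to upgrade this connected-with-full-support subgraph to the whole of $\diag(\Gamma)$.

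The key step is to show that every vertex of $\diag(\Gamma)$ is adjacent in $\diag(\Gamma)$ to some vertex of $\Delta$. Fix an arbitrary vertex $\{a,b\}\in\diag(\Gamma)$. By definition it is a diagonal of an induced square, so $|\lk(a)\cap\lk(b)|\geq 2$. By Corollary~\ref{cor:linkintersectionconvex}, the set $\lk(a)\cap\lk(b)$ is $\Lambda$--convex, so the $\Lambda$--geodesic between any two of its elements lies entirely inside it; in particular it contains at least one $\Lambda$--edge $e$. This edge $e$ corresponds to a vertex $\{c,d\}\in\Delta$ with $\{c,d\}\subset\lk(a)\cap\lk(b)$, and then $\{a,b\}\join\{c,d\}$ is an induced square of $\Gamma$, so $\{a,b\}$ and $\{c,d\}$ are adjacent in $\diag(\Gamma)$.

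Combining the two observations: every vertex of $\diag(\Gamma)$ is at distance at most $1$ from the connected subgraph $\Delta$, hence $\diag(\Gamma)$ itself is connected. Since $\Delta\subset\diag(\Gamma)$ already supports all of $\Gamma$, so does $\diag(\Gamma)$. By Definition~\ref{defCFS}, $\Gamma$ is strongly \CFS. No step is really an obstacle here, since the hard work has been done in Corollary~\ref{cor:linkintersectionconvex}; the only mild subtlety is observing that a $\Lambda$--convex set with at least two vertices contains an edge of $\Lambda$, which follows immediately from $\Lambda$--convexity together with the fact that $\Lambda$ is a forest whose components are connected (condition $\mathcal{R}_1$) once one notes that both vertices must lie in a common tree of $\Lambda$, which in turn follows because they share at least one common $\Gamma$--neighbor and hence a $\Lambda$--edge would connect their colors.
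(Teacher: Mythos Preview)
Your proof is correct and follows exactly the paper's approach: the corollary is not given a separate proof in the paper but is noted parenthetically inside the proof of Theorem~\ref{satellitedismantling}, where the same argument (every vertex of $\diag(\Gamma)$ is adjacent to a vertex of the connected full-support subgraph $\Delta$ via Corollary~\ref{cor:linkintersectionconvex}) appears. The only cosmetic issue is the phrasing in your last sentence; it would be clearer to say that two vertices of $\lk(a)\cap\lk(b)$ share the $\Gamma$--neighbor $a$, hence have the same color and lie in the same component of $\Lambda$.
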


\begin{alg} \label{ConingAlg}  \leavevmode
  We perform the following inductive procedure to build a graph $\Gamma$ with an associated graph $\Lambda\leq \Gamma^c$ with two connected components:
    \begin{enumerate}[topsep=2pt]
        \item The initial graph $\Gamma_0$ is a square, the associated graph $\Lambda_0 = \Gamma_0^c$ is the complement graph of $\Gamma_0$.
        \item Build a sequence of pairs $((\Gamma_0,\Lambda_0), (\Gamma_1,\Lambda_1), \dots, (\Gamma_n,\Lambda_n))$ by applying the induction step:
       Given $(\Gamma_i,\Lambda_i)$, pick a vertex $v_{i}$ and a set $N_{i} \subseteq \lk_{\Gamma_i}(v_i)$ of at least two vertices that is $\Lambda_i$--convex.
          Define $\Gamma_{i+1}$ by coning-off $N_i$ to $x_{i+1}$, and define $\Lambda_{i+1}$ by adding an edge from $v_i$ to $x_{i+1}$ to $\Lambda_i$. 
    \end{enumerate}
  \end{alg}
Note that at each step, $x_{i+1}$ is a satellite of $v_i$ in $\Gamma_{i+1}$.

\begin{thm} \label{thmConingDecompOneWay}
 If the pair $(\Gamma,\Lambda)$ can be constructed by the Coning Algorithm \ref{ConingAlg}, then $\Gamma$ is incomplete, triangle-free and has no separating cliques, and $\Lambda$ is a FIDL--$\Lambda$ for $\Gamma$.
\end{thm}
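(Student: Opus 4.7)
The proof is by induction on the number of coning steps $n$. The base case $n=0$ is direct: $\Gamma_0$ is a 4--cycle with $\Lambda_0$ its two diagonals, and each of $\mathcal{R}_1$--$\mathcal{R}_4$, $\mathcal{F}_1$, together with triangle-freeness, incompleteness, and absence of a separating clique, can be checked on the unique square and the unique cycle (both of which are just $\Gamma_0$ itself).

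For the inductive step, assume $(\Gamma_i,\Lambda_i)$ has all the required properties, and pass to $(\Gamma_{i+1},\Lambda_{i+1})$ where $x_{i+1}$ is a new vertex with $\lk_{\Gamma_{i+1}}(x_{i+1})=N_i\subseteq\lk_{\Gamma_i}(v_i)$ and the only new $\Lambda$--edge is $v_i\edge x_{i+1}$. The structural properties come first. Triangle-freeness holds since $N_i$ is independent (as a subset of $\lk(v_i)$), and incompleteness is then automatic. Bipartiteness of $\Gamma_{i+1}$ extends that of $\Gamma_i$ by coloring $x_{i+1}$ the same color as $v_i$, which gives $\mathcal{R}_2$ at once; $\mathcal{R}_1$ and $\mathcal{F}_1$ hold because the new $\Lambda$--edge attaches $x_{i+1}$ as a leaf of $v_i$'s tree. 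For no separating clique, a clique in the triangle-free graph $\Gamma_{i+1}$ is empty, a vertex, or an edge; in each case a short check using $|N_i|\geq 2$ and the independence of $N_i$ reduces connectivity of the complement to the same property for $\Gamma_i$.

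Conditions $\mathcal{R}_3$ and $\mathcal{R}_4$ are the core of the induction and rest on two observations: (a) $x_{i+1}$ is a $\Lambda_{i+1}$--leaf attached at $v_i$, so the $\Lambda_{i+1}$--hull of any set of old vertices agrees with its $\Lambda_i$--hull, and the $\Lambda_{i+1}$--hull of $\{x_{i+1}\}$ together with a non-empty set of red old vertices $S_\red$ equals $\{x_{i+1}\}\cup\hull_{\Lambda_i}(S_\red\cup\{v_i\})$; and (b) $N_i$ is $\Lambda_i$--convex. For $\mathcal{R}_3$, a new square has the form $\{x_{i+1},b\}\join\{c,d\}$ with $c,d\in N_i$ and $b$ a common neighbor of $c$ and $d$ in $\Gamma_i$. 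Observation (b) together with (a) gives $\hull_{\Lambda_{i+1}}\{c,d\}\subseteq N_i\subseteq\lk(x_{i+1})\cap\lk(v_i)$, which handles joining that hull to $\{x_{i+1}\}$ and to anything in $\lk(v_i)$; observation (a) together with $\mathcal{R}_3$ for $\Gamma_i$ applied to the square $\{v_i,b\}\join\{c,d\}$ (or, when $b=v_i$, a direct check) handles the join against the rest of $\hull_{\Lambda_{i+1}}\{x_{i+1},b\}$.

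For $\mathcal{R}_4$, let $\gamma$ be a cycle in $\Gamma_{i+1}$. If $x_{i+1}\notin\gamma$, the condition is inherited from $\Gamma_i$. Otherwise write $\gamma=x_{i+1}\edge c\edge\cdots\edge d\edge x_{i+1}$ with $c,d\in N_i$; since $\gamma$ has even length at least $4$ it contains a red vertex other than $x_{i+1}$, so $v_i\in\hull_{\Lambda_{i+1}}(\gamma)$ by (a). The two edges of $\gamma$ incident to $x_{i+1}$ are witnessed by the square $\{x_{i+1},v_i\}\join\{c,d\}$. For the remaining edges we reduce to $\mathcal{R}_4$ in $\Gamma_i$ by replacing the $x_{i+1}$ portion of $\gamma$ with a path through $v_i$: if $v_i\notin\gamma$, the loop $\gamma^*:=(\gamma\setminus\{x_{i+1}\})\cup\{c\edge v_i,v_i\edge d\}$ is a simple cycle in $\Gamma_i$ containing all remaining edges; if $v_i\in\gamma$, split $\gamma\setminus\{x_{i+1}\}$ at $v_i$ into subpaths $P_1,P_2$ and close each with $v_i\edge c$ or $v_i\edge d$ into two subcycles in $\Gamma_i$, handling the degenerate case where some $P_j$ has length one (so the closing edge duplicates a $\gamma$--edge) directly via the square $\{c,d\}\join\{v_i,x_{i+1}\}$. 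In every case observation (a) ensures the $\Lambda_i$--hull of the auxiliary cycle lies inside $\hull_{\Lambda_{i+1}}(\gamma)$. The main obstacle is precisely the $v_i\in\gamma$ case, where the naive substitution of $v_i$ for $x_{i+1}$ produces a non-simple walk and the decomposition into cycles must be arranged carefully.
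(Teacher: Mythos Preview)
Your proof is correct and follows essentially the same approach as the paper's: induction on the number of coning steps, with the substantive work being $\mathcal{R}_3$ and $\mathcal{R}_4$, handled by replacing $x_{i+1}$ by $v_i$ in squares and cycles and invoking the inductive hypothesis together with the $\Lambda_i$--convexity of $N_i$. You supply more detail than the paper does---in particular the explicit treatment of the structural graph properties, the case $b=v_i$ in $\mathcal{R}_3$, and the split of the substituted loop into at most two cycles in $\mathcal{R}_4$---but these are elaborations of exactly the steps the paper's proof indicates tersely.
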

\begin{proof}
  The proof is by induction on $n$.
  $\Theta(\Gamma_0,\Lambda_0)$ satisfies all the conditions.
  Assume $\Theta(\Gamma_i,\Lambda_i)$ does too. 
  We show that $\Theta(\Gamma_{i+1},\Lambda_{i+1})$ satisfies
  $\mathcal{R}_3$ and $\mathcal{R}_4$. The other conditions are easy
  to verify.

    Since $\Theta(\Gamma_i,\Lambda_i)$ satisfies condition $\mathcal{R}_3$, we only need to check squares in $\Gamma_{i+1}$ containing the new vertex $x_{i+1}$.
By construction of $\Lambda_{i+1}$, any such square is of the form $\{x_{i+1},l\}\join\{n,n^\prime\}$ with $n,n^\prime \in N_i$, and, since $N_i=\lk_{\Gamma_{i+1}}(x_{i+1})$ is $\Lambda_i$--convex, $\{x_{i+1}\}\join \hull_{\Lambda_{i+1}}\{n,n^\prime\}\subset\Gamma_{i+1}$.
    
    The vertex $x_{i+1}$ is only connected to $v_i$ in $\Lambda_{i+1}$.
    Hence, $\hull_{\Lambda_{i+1}}\{x_{i+1},l\}\setminus \{x_{i+1}\} = \hull_{\Lambda_{i+1}}\{v_i,l\}=\hull_{\Lambda_{i}}\{v_i,l\}$.
    Since $\Gamma_i$ is triangle-free, $v_i$ and $l$ are not adjacent, so 
    $\{v_i,l\}\join\{n,n^\prime\} \subseteq \Gamma_{i}$ is a square.
    Condition $\mathcal{R}_3$ for $\Theta(\Gamma_i,\Lambda_i)$ implies $\hull_{\Lambda_i}\{v_i,l\}\join\hull_{\Lambda_i}\{n,n^\prime\}\subset\Gamma_i$.
    Condition $\mathcal{R}_3$ is satisfied, since:
    \[\hull_{\Lambda_{i+1}}\{x_{i+1},l\}\join\hull_{\Lambda_{i+1}}\{n,n^\prime\}=\{x_{i+1}\}\join\hull_{\Lambda_{i}}\{n,n^\prime\}\cup
      \hull_{\Lambda_i}\{v_i,l\}\join\hull_{\Lambda_i}\{n,n^\prime\}\subset\Gamma_{i+1}\]

    Since $\Theta(\Gamma_i,\Lambda_i)$ satisfies condition $\mathcal{R}_4$, we only need to check cycles containing $x_{i+1}$.
    Let $\gamma$ be a cycle containing $x_{i+1}$.
    Since $\lk_{\Gamma_{i+1}}(x_{i+1})=N_i$, $\gamma$ is of the form $\gamma = (x_{i+1},n,l_1,\dots, l_k,n^\prime)$ for $n,n^\prime \in N_i$.
The edges incident to $x_{i+1}$ are contained in the square
$\{x_{i+1},v_i\}\join\{n,n'\}$, all of whose vertices are in $\gamma$,
except possibly $v_i$.
    But $x_{i+1}$ is a leaf of $\Lambda_{i+1}$ connected only to $v_i$, so $v_i$ is certainly in $\hull_{\Lambda_{i+1}}(\gamma)$.
    For the remaining edges of $\gamma$, replace $\gamma$ by $\gamma^\prime = (v_i,n,l_1,\dots, l_k,n^\prime)$, which is a loop in $\Gamma_i$ that can be split into at most two cycles in $\Gamma_i$. Condition $\mathcal{R}_4$ for $\Theta(\Gamma_i,\Lambda_i)$ implies each of these edges belongs to a square with vertices in the $\Lambda_i$--hull of its cycle, which is a subset of the $\Lambda_{i+1}$--hull of $\gamma$.
    \end{proof}

\begin{exam} \label{ExampleConingDecompCubeWithDiag}
    The graph $\Gamma$ of Example~\ref{ExampleCubeWithDiagonal}  that is the 1--skeleton of a 3-cube with one
    space diagonal has a FIDL-$\Lambda$.
    The pair $(\Gamma,\Lambda)$ can be constructed by the Coning
    Algorithm \ref{ConingAlg}. A coning sequence $(x_{i},\Gamma_i,\Lambda_i, v_i,N_i)$  is illustrated in Figure \ref{fig:ConingDecompCubeWithDiag}.
    \begin{figure}[ht!]
        \centering
        \scriptsize
        \def\svgwidth{.95\textwidth}
        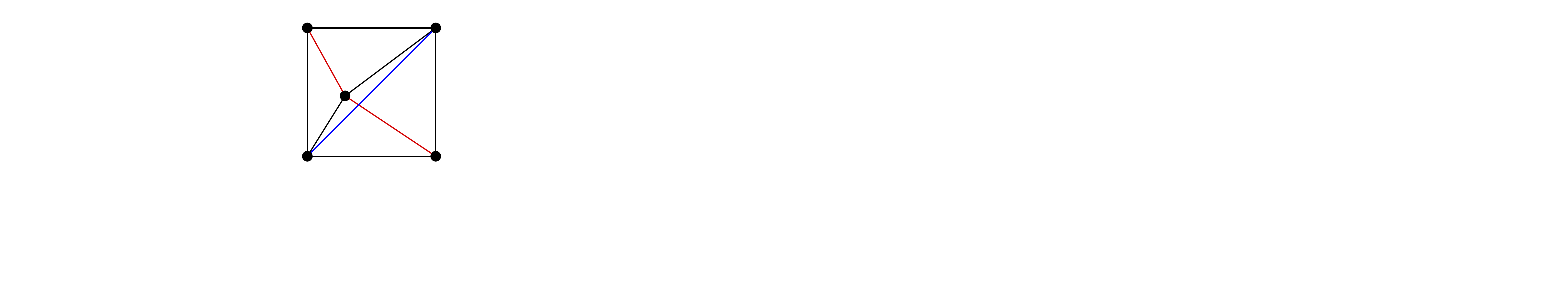
        \caption{Example of a Coning Sequence}
        \label{fig:ConingDecompCubeWithDiag}
    \end{figure}
\end{exam}

\begin{thm}\label{maintheorem}
  Let $\Gamma$ be an incomplete, triangle-free graph without separating cliques.
  $\Gamma$ admits a FIDL--$\Lambda$ if and only if it admits a
  satellite-dismantling sequence
  $\Gamma=\Gamma_n\supset\Gamma_{n-1}\supset\dots\supset \Gamma_0$
  such that $\Gamma_0$ is a square, each $\Gamma_i$ is incomplete and
  triangle-free with no separating clique, and the following condition
  is satisfied:
For $0\leq i<n$ let $\{x_{i+1}\}:=\Gamma_{i+1}\setminus\Gamma_i$ and
$N_i:=\lk_{\Gamma_{i+1}}(x_{i+1})$. 
\begin{equation}
  \label{eq:1}
\forall  i<n,\,  \exists v_i\in V_i:=\{v\in\Gamma_i\mid N_i\subset\lk_{\Gamma_i}(v)\},\,\forall j>i,\, \text{if }x_{i+1}\in
N_j\text{ and }N_j\cap\Gamma_i\neq\emptyset\text{ then } v_i\in N_j
  \tag{$\dagger$}
\end{equation}
\end{thm}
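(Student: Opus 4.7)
The plan is to prove the two directions separately. For the forward direction, Theorem~\ref{satellitedismantling} already gives a satellite-dismantling sequence, so the task reduces to exhibiting $v_i \in V_i$ satisfying $(\dagger)$. For the backward direction, the plan is to reverse the given sequence, feed the pairs $(v_i, N_i)$ to the Coning Algorithm, and invoke Theorem~\ref{thmConingDecompOneWay}; the real issue, and the main obstacle, is showing that the Coning Algorithm's $\Lambda_i$-convexity requirement for $N_i$ holds at every step, since Lemma~\ref{lem:linksconvex} alone only gives convexity of the larger set $\lk_{\Gamma_i}(v_i)$.

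For the forward direction, write $\Lambda_i := \Lambda \cap \Gamma_i$ and let $v_i$ be the unique $\Lambda_{i+1}$-neighbor of the leaf $x_{i+1}$. Since $x_{i+1}$ is a satellite of $v_i$ in $\Gamma_{i+1}$, one has $N_i \subset \lk_{\Gamma_i}(v_i)$, so $v_i \in V_i$. To verify $(\dagger)$, take $j > i$ with $x_{i+1} \in N_j$ and $w \in N_j \cap \Gamma_i$. Because $x_{j+1}$ is a satellite of $v_j$ in $\Gamma_{j+1}$, the vertex $v_j$ is $\Gamma$-adjacent to both $x_{i+1}$ and $w$, producing the square $\{x_{j+1}, v_j\} \join \{x_{i+1}, w\}$ (triangle-freeness of $\Gamma$ rules out the edge $x_{i+1}\edge w$). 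The key observation is that $v_i \in [x_{i+1}, w]_\Lambda$: removing $v_i$ from its $\Lambda$-tree separates $x_{i+1}$, together with all $\Lambda$-descendants $x_{k+1}$ attached at later stages (all living in $\Gamma \setminus \Gamma_i$), from the rest, so any $w \in \Gamma_i$ in the same $\Lambda$-tree is reached only via $v_i$. Condition $\mathcal{R}_3$ applied to the square then forces $v_i$ to be $\Gamma$-adjacent to $x_{j+1}$, hence $v_i \in N_j$.

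For the backward direction, I would induct on $i$, building $(\Gamma_i, \Lambda_i)$ via Coning so that Theorem~\ref{thmConingDecompOneWay} supplies a FIDL--$\Lambda$ at each stage. To close the convexity gap I would strengthen the inductive hypothesis to: for every vertex $x$ of $\Gamma$, the set $M_i(x) := \lk_\Gamma(x) \cap \Gamma_i$ is $\Lambda_i$-convex. Taking $x = x_{i+1}$ recovers $N_i = M_i(x_{i+1})$, because the later-added $\Gamma$-neighbors $x_{k+1}$ of $x_{i+1}$ are outside $\Gamma_i$. The base case $i=0$ follows from triangle-freeness: any $M_0(x)$ has at most two elements, and two elements must be diagonal vertices of the square $\Gamma_0$, hence a $\Lambda_0$-edge. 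The inductive step splits by whether $x$ is $\Gamma$-adjacent to $x_{i+1}$ and, in the adjacent case, by whether $x \in \Gamma_i$ or $x = x_{j+1}$ with $j > i$. The first two cases follow from the fact that $x_{i+1}$ is a $\Lambda_{i+1}$-leaf attached to $v_i$, plus the inductive hypothesis (noting that $x \in N_i \subset \lk_{\Gamma_i}(v_i)$ places $v_i$ in $M_i(x)$ in the second case, so that $[x_{i+1}, a]_{\Lambda_{i+1}} = \{x_{i+1}\} \cup [v_i, a]_{\Lambda_i}$ stays in $M_{i+1}(x)$). The third case is exactly where $(\dagger)$ enters: assuming the nontrivial situation $M_i(x) \neq \emptyset$, both hypotheses of $(\dagger)$ hold ($x_{i+1} \in N_j$ and $M_i(x) = N_j \cap \Gamma_i \neq \emptyset$), and its conclusion $v_i \in N_j \cap \Gamma_i = M_i(x)$ lets the same $\Lambda_{i+1}$-geodesic argument conclude. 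Thus $(\dagger)$ is precisely the condition needed to propagate convexity through later-added satellites.
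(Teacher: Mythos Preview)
Your proof is correct and follows the same overall strategy as the paper: invoke Theorem~\ref{satellitedismantling} for the forward direction with $v_i$ the unique $\Lambda$--neighbour of the leaf $x_{i+1}$, and run the Coning Algorithm via Theorem~\ref{thmConingDecompOneWay} for the converse. The differences are in packaging rather than substance. In the forward direction you verify $v_i\in N_j$ by exhibiting the square $\{x_{j+1},v_j\}\join\{x_{i+1},w\}$ and applying $\mathcal{R}_3$ directly to force $\hull_\Lambda\{x_{i+1},w\}\subset\lk(x_{j+1})$; the paper instead invokes Lemma~\ref{lem:linksconvex} to see that $N_j$ is $\Lambda_{j+1}$--convex and then compares the two subtrees $N_j$ and $\Lambda_{i+1}$ containing $x_{i+1}$ and $w$. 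Your route is slightly more elementary, as it avoids the appeal to link convexity. In the backward direction the paper proves only what is needed---that each fixed $N_j$ is $\Lambda_j$--connected, hence convex---by repeatedly applying $(\dagger)$ to descend from any $x_{i+1}\in N_j$ to its $\Lambda$--parent $v_i\in N_j$ until reaching the bottom stratum $N_j\cap\Gamma_{k_j}$, which is a single vertex or a $\Lambda_0$--edge. Your strengthened hypothesis that every $M_i(x)$ is $\Lambda_i$--convex is the same descent reorganised as an induction on $i$; it proves more than necessary but is a clean way to isolate exactly where $(\dagger)$ is used.
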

\begin{proof}
 Suppose we have a satellite-dismantling sequence for
$\Gamma$ satisfying the given conditions. 
Let $\Lambda_0:=\Gamma_0^c$, and construct $\Lambda_{i+1}$ from
$\Lambda_i$ by choosing $v_i$ satisfying \eqref{eq:1}, and adding a
$\Lambda$--edge from $v_i$ to $x_{i+1}$. 
To apply Theorem~\ref{thmConingDecompOneWay} we must verify that $N_j$
is $\Lambda_j$--convex, for each $j$.
Given $j$, let $k_j\geq 0$ be the minimum index such that
$N_j\cap\Gamma_{k_j}\neq\emptyset$.
The vertices of $N_j$ are pairwise at distance 2 from one another, so
they all have the same color; assume it is $\blue$. 
If $y\in N_j$ then either 
$y\in\Gamma_{k_j}$ or $y=x_{i+1}$ for some $k_j\leq i<j$.
In the second case, by \eqref{eq:1}, $y$ is $\Lambda_j$--adjacent to
$v_i\in N_j$, which is in a lower stratum.
In this way, \eqref{eq:1} implies that every vertex in $N_j$ can be
$\Lambda_j$--connected through vertices in $N_j$ to a vertex in $N_j\cap \Gamma_{k_j}$.
If $k_j>0$ then $\Gamma_{k_j}\setminus\Gamma_{k_j-1}$ is a single
vertex, and if $k_j=0$ then the two vertices $\Lambda_{0,\blue}$ are
connected by an edge.
Thus, $N_j$ is $\Lambda_j$--convex. 

  Conversely, 
  if $\Gamma$ admits a FIDL--$\Lambda$ then, by
  Theorem~\ref{satellitedismantling}, it admits a
  satellite-dismantling sequence leading to a square through graphs
  with the desired properties.
  For each satellite 
  $x_{i+1}$ there is a unique $\Lambda_{i+1}$--edge at $x_{i+1}$ connecting it
  to a $v_i\in V_i$.
  We check that this $v_i$ satisfies \eqref{eq:1}.

  Suppose for some $j$ there is an $i$ such that  $x_{i+1}\in N_j$ and
  there exists $y\in\Gamma_i\cap N_j$. Assume
  $N_j\subset\Lambda_\red$.
Lemma~\ref{lem:linksconvex} says 
$N_j:=\lk_{\Gamma_{j+1}}(x_{j+1})$ is $\Lambda_{j+1}$-convex, but
$\hull_{\Lambda_j}(N_j)=\hull_{\Lambda_{j+1}}(N_j)$, so $N_j$ is 
$\Lambda_j$--convex.
Now we have two points $x_{i+1}$ and $y$ that are contained in two
different subtrees of $\Lambda_{j,\red}$: one is 
$N_j\cap\Lambda_{j,\red}$ and the other is $\Lambda_{i+1,\red}$.
Thus, the $\Lambda_{j,\red}$--geodesic from $x_{i+1}$ to $y$, the
$N_j\cap\Lambda_{j,\red}$--geodesic from $x_{i+1}$ to $y$ and the  
$\Lambda_{i+1,\red}$--geodesic from $x_{i+1}$ to $y$ coincide.
We know that the $\Lambda_{i+1,\red}$--geodesic from $x_{i+1}$ to $y$ goes through $v_i$, since $x_{i+1}$ is a leaf of $\Lambda_{i+1,\red}$
connected only to $v_i$.
Thus $v_i\in N_j$, and \eqref{eq:1} is satisfied. 
\end{proof}

\begin{prop}\label{mixed_multiple_trees_are_DL}
  Let $T$ be a finite tree, with at least one edge, whose vertices are labelled by natural
  numbers $n(v)$ such that 1 occurs only on leaves and if $T$ is a
  single edge then both labels are greater than 1.
Let  $\Gamma$ be the graph that has, for each $v\in T$, $n(v)$--many vertices
$(v,0),\dots,(v,n(v)-1)$.
Connect $(v,i)$ to $(w,j)$ by an edge if and only if $v$ is adjacent
to $w$ in $T$.
Then $\Gamma$ admits a FIDL--$\Lambda$. 
\end{prop}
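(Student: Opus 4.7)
The plan is to construct $\Lambda$ directly and verify the Dani-Levcovitz conditions $\mathcal{R}_1$--$\mathcal{R}_4$ and $\mathcal{F}_1$; condition $\mathcal{F}_2$ will then follow automatically by \cite[Remark~4.3]{danilev24}. I first fix a 2-coloring $c\colon V(T)\to\{\red,\blue\}$ of the bipartite tree $T$ and extend it to $\Gamma$ by $c((v,i)):=c(v)$, which makes $\Gamma$ bipartite. Write $\pi\colon (v,i)\mapsto v$ for the natural projection and $C_v:=\{(v,0),\dots,(v,n(v)-1)\}$. Triangle-freeness is immediate, since a $\Gamma$-triangle would project to a $T$-triangle. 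Incompleteness and the absence of a separating clique I check by hand, using that any potential cut vertex or cut edge is either evaded by a twin in a cluster of size $\geq 2$ or corresponds to a leaf of $T$ whose removal leaves $T$ (and hence $\Gamma$ minus that vertex) connected; the single-edge hypothesis ensures the small case is handled.

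Next I define $\Lambda=\Lambda_\red\sqcup\Lambda_\blue$ as follows. For each $v\in T$ with $n(v)\geq 2$ I add the within-cluster path $(v,0)\edge(v,1)\edge\dots\edge(v,n(v)-1)$ on $C_v$. For each $u\in T$, I order the $T$-neighbors of $u$ as $w_1,\dots,w_k$ (all of the opposite color to $u$, hence all in the same component $\Lambda_{\bar c(u)}$) and add the between-cluster edges $(w_1,0)\edge(w_2,0)\edge\dots\edge(w_k,0)$. A vertex/edge count shows $\Lambda_\red$ and $\Lambda_\blue$ each have exactly one fewer edge than vertex; connectivity holds because same-color $T$-vertices can be joined through the $\lk_T(u)$-paths; acyclicity holds because any $\Lambda$-cycle would project to a non-degenerate closed walk in the tree $T$, impossible. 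Thus $\mathcal{R}_1$ holds, and $\mathcal{R}_2$, $\mathcal{F}_1$ are immediate since same-color vertices carry no $\Gamma$-edges.

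For $\mathcal{R}_3$, the projection $\pi$ sends an induced square to a length-$4$ closed walk in $T$, which in a tree must backtrack. So each induced square is of one of two types: (i) both diagonals lie in single clusters $C_v,C_u$ with $v\sim_T u$, or (ii) one diagonal is in some $C_u$ and the other consists of vertices in $C_{v_1},C_{v_2}$ for distinct $v_1,v_2\in\lk_T(u)$. For (i) both hulls stay in their clusters, whose join is in $\Gamma$ by construction. For (ii) the within-cluster hull stays in $C_u$, and the cross-cluster hull is forced, by the subtree property of a tree, to traverse only the within-cluster subpaths of $C_{v_1},C_{v_2}$ together with a subpath of the $\lk_T(u)$-path inserted in the construction, hence lies in $\bigcup_{w\in\lk_T(u)}C_w$; since every such $w$ is $T$-adjacent to $u$, its cluster joins fully with $C_u$ in $\Gamma$.

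The main work is $\mathcal{R}_4$, which I prove via a parity observation on closed walks in trees. For any cycle $\gamma$ in $\Gamma$, the projection $\pi(\gamma)$ is a closed walk in $T$, so each $T$-edge is traversed an even number of times. Given an edge $e=(v,i)\edge(u,k)$ of $\gamma$, the $T$-edge $\{v,u\}$ is used at least twice, so $\gamma$ contains a second $\Gamma$-edge $e'$ between $C_v$ and $C_u$. A short case check on whether $e'$ shares an endpoint with $e$ forces $|\gamma\cap C_v|\geq 2$ or $|\gamma\cap C_u|\geq 2$. Assuming the latter (the other case is symmetric), pick $(u,k^*)\in\gamma$ with $k^*\neq k$ and let $r$ be the $\gamma$-neighbor of $(u,k)$ other than $(v,i)$. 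Then $r$ is red with $\pi(r)\in\lk_T(u)$, so $(v,i)\not\sim_\Gamma r$ (distinct $T$-neighbors of $u$ are non-adjacent in the tree, and twins within $C_v$ are non-adjacent in $\Gamma$). Hence $\{(v,i),r\}\join\{(u,k),(u,k^*)\}$ is an induced square on $e$ whose opposite vertices $r$ and $(u,k^*)$ both lie in $\gamma\subset\hull_\Lambda(\gamma)$. The anticipated obstacle is precisely this parity-plus-case-analysis step; the remaining verifications are routine enumerations.
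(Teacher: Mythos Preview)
Your argument is correct, and it takes a genuinely different route from the paper. The paper proves this proposition as an application of its main machinery: it exhibits an explicit satellite-dismantling sequence for $\Gamma$ (first collapsing each cluster $C_v$ down to size at most $2$, then peeling off $T$-leaves) and checks at each step that the coned-off neighborhood is $\Lambda$-convex, thereby invoking Theorem~\ref{thmConingDecompOneWay}/Theorem~\ref{maintheorem}. You instead write down $\Lambda$ directly---within-cluster paths plus a path on $\{(w,0):w\in\lk_T(u)\}$ for each $u$---and verify $\mathcal{R}_1$--$\mathcal{R}_4$ and $\mathcal{F}_1$ by hand. Your approach is more self-contained and does not rely on the dismantling theorem; the paper's approach serves as an illustration of that theorem in action. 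The two constructions even yield different (but both valid) $\Lambda$'s in general, since the paper's between-cluster edges come from identifying a removed $T$-leaf as a satellite of a vertex at $T$-distance~$2$, whereas yours link neighbors of a common $T$-vertex.

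One small point: your sentence ``acyclicity holds because any $\Lambda$-cycle would project to a non-degenerate closed walk in the tree $T$'' is not literally correct as stated, since a between-cluster $\Lambda$-edge joins vertices at $T$-distance~$2$ rather than projecting to a $T$-edge, and a hypothetical $\Lambda$-cycle could in principle project degenerately. But this is harmless: your vertex/edge count together with connectivity already forces each $\Lambda$-component to be a tree, so the projection remark is redundant and can simply be dropped. The $\mathcal{R}_4$ parity argument is clean and works exactly as you describe.
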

\begin{proof}
We describe a satellite-dismantling and then say why it satisfies
  Theorem~\ref{maintheorem}.

  If $v\in T$ with $n(v)>1$ then for $i+1\leq n(v)-1$, $(v,i+1)$ is a satellite of $(v,i)$,
  coning off $\{(w,j)\mid w\in\lk_T(v),\,0\leq
  j<n(w)\}$.
  Pick any such $v$ and iteratively remove $(v,n(v)-1)$, \dots,
  $(v,2)$.
  Repeat for all $v$ with $n(v)>2$.
  This reduces us to the case that $n$ is bounded by 2. 
 
 If $\Gamma$ is not a square and there is a leaf $v$ of $T$ attached
 to $w$ with $n(v)=1$ then there exists $x\in T$ at distance 2 from $v$.
 In $\Gamma$, $(v,0)$ is a satellite of $(x,0)$ coning-off
 $\{(w,0),(w,1)\}$.
 In this way, remove all $T$--leaves with $n$--value 1, until either
 we reach a square or  have smaller $\Gamma$ and $T$ with $n\equiv 2$.

 Suppose $n\equiv 2$  and $\Gamma$ is not a square, so $T$
 is not a single edge. 
If $v$ is a leaf of $T$ attached to $w$ then $(v,1)$ is a satellite
of $(v,0)$ coning-off
 $\{(w,0),(w,1)\}$.
 Thus, we reduce $n$ to 1 on each of the leaves of $T$, and then rerun the
 case in the previous paragraph.

Iterating gives a satellite-dismantling reducing $\Gamma$ to a
square. Furthermore, in the dismantling we specified for each
satellite which vertex we were considering it as the satellite of,
which allows an explicit construction of $\Lambda$.
It is observed that the coned-off set is $\Lambda$--convex at each
step, so this is a FIDL.
\end{proof}
\begin{remark}
  The proof relies on the fact that $T$ is a tree because after
  reducing to $n\leq 2$ the dismantling proceeds by removing
  $T$--leaves. Proposition~\ref{mixed_multiple_trees_are_DL} cannot be true for
arbitrary graphs $\Delta$, as if $\Delta$ has a long isometrically
embedded loop then so does $\Gamma$, which prevents the existence of a
FIDL--$\Lambda$, by Proposition~\ref{nolongcycles}.
However, if $\Delta$ is an arbitrary connected graph and $n\equiv 2$
then the $\Gamma$ constructed from $\Delta$ as above is commensurable
to a RAAG, by \cite{davisjanus}.
\end{remark}

\begin{exam}\label{ex:mixed_multiple_trees_are_DL}
  Figure~\ref{fig:mixedmultipletree} works out an example of a tree as
  in Proposition~\ref{mixed_multiple_trees_are_DL} in detail.
  \begin{figure}[h!]
    \centering
    \tikz[baseline=-9ex]{\tiny
  \coordinate[label={270:1}, label={90:$a$}] (0) at (0,0);
   \coordinate[label={270:4}, label={90:$b$}] (1) at (1,0);
    \coordinate[label={270:3}, label={ 90:$c$}] (2) at (2,0);
     \coordinate[label={270:2}, label={90:$d$}] (3) at (3,0);
     \filldraw (0) circle (1pt) (1) circle (1pt) (2) circle (1pt) (3)
     circle (1pt);
     \draw (0)--(1)--(2)--(3)}
   \hfill
   \begin{tikzpicture}[baseline=-6ex, scale=2]\tiny
\coordinate[label={[label distance=-4pt] 225:$b_0$}] (0) at (-0.5,0.25);
\coordinate[label={[label distance=-1pt] 315:$c_0$}] (1) at (0,0.25);
\coordinate[label={[label distance=-2pt] 315:$c_1$}] (2) at (0,0.125);
\coordinate[label={[label distance=-2pt] 315:$c_2$}] (3) at (0,0);
\coordinate[label={[label distance=-1pt] 180:$a_0$}] (4) at (-1,0.25);
\coordinate[label={[label distance=-4pt] 225:$b_1$}] (5) at (-0.5,0.125);
\coordinate[label={[label distance=-4pt] 225:$b_2$}] (6) at (-0.5,0);
\coordinate[label={[label distance=-4pt] 225:$b_3$}] (7) at (-0.5,-.125);
\coordinate[label={[label distance=-1pt] 0:$d_0$}] (8) at (0.5,0.25);
\coordinate[label={[label distance=-1pt] 0:$d_1$}] (9) at (0.5,0.125);
\draw (0)--(1);
\draw (0)--(2);
\draw (0)--(3);
\draw (0)--(4);
\draw (1)--(5);
\draw (1)--(6);
\draw (1)--(7);
\draw (1)--(8);
\draw (1)--(9);
\draw (2)--(5);
\draw (2)--(6);
\draw (2)--(7);
\draw (2)--(8);
\draw (2)--(9);
\draw (3)--(5);
\draw (3)--(6);
\draw (3)--(7);
\draw (3)--(8);
\draw (3)--(9);
\draw (4)--(5);
\draw (4)--(6);
\draw (4)--(7);
\filldraw[red] (0) circle (.5pt);
\filldraw[blue] (1) circle (.5pt);
\filldraw[blue] (2) circle (.5pt);
\filldraw[blue] (3) circle (.5pt);
\filldraw[blue] (4) circle (.5pt);
\filldraw[red] (5) circle (.5pt);
\filldraw[red] (6) circle (.5pt);
\filldraw[red] (7) circle (.5pt);
\filldraw[red] (8) circle (.5pt);
\filldraw[red] (9) circle (.5pt);
\draw[color=blue] (1)--(2)--(3) (1) .. controls (0,.5) and (-1,.5) .. (4);
\draw[color=red] (0)--(5)--(6)--(7) (8)--(9) (0) .. controls (-.5,.5) and (.5
,.5) .. (8);
\end{tikzpicture}
\hfill
\begin{tikzpicture}[scale=2]\tiny
\coordinate[label={[label distance=-1pt] 270:$b_0b_1$}] (0) at (-1,0);
\coordinate[label={[label distance=0pt] 270:$c_0c_1$}] (1) at (.5,.4);
\coordinate[label={[label distance=-2pt] 270:$b_0b_2$}] (2) at (-1,.2);
\coordinate[label={[label distance=-1pt] 270:$b_0b_3$}] (3) at (-1,.4);
\coordinate[label={[label distance=-1pt] 0:$b_0d_0$}] (4) at (2,0);
\coordinate[label={[label distance=-1pt] 0:$b_0d_1$}] (5) at (2,.1);
\coordinate[label={[label distance=-1pt] 90:$c_0c_2$}] (7) at (.5,.6);
\coordinate[label={[label distance=-1pt] 180:$a_0c_0$}] (8) at (-2,.4);
\coordinate[label={[label distance=-1pt] 90:$c_1c_2$}] (9) at (.5,.8);
\coordinate[label={[label distance=-1pt] 180:$a_0c_1$}] (10) at (-2,.5);
\coordinate[label={[label distance=-1pt] 180:$a_0c_2$}] (11) at (-2,.6);
\coordinate[label={[label distance=-1pt] 90:$b_1b_2$}] (12) at (-1,.6);
\coordinate[label={[label distance=-2pt] 90:$b_1b_3$}] (13) at (-1,.8);
\coordinate[label={[label distance=-1pt] 0:$b_1d_0$}] (14) at (2,.2);
\coordinate[label={[label distance=-1pt] 0:$b_1d_1$}] (15) at (2,.3);
\coordinate[label={[label distance=-1pt] 90:$b_2b_3$}] (17) at (-1,1);
\coordinate[label={[label distance=-1pt] 0:$b_2d_0$}] (18) at (2,.4);
\coordinate[label={[label distance=-1pt] 0:$b_2d_1$}] (19) at (2,.5);
\coordinate[label={[label distance=-1pt] 0:$b_3d_0$}] (21) at (2,.6);
\coordinate[label={[label distance=-1pt] 0:$b_3d_1$}] (22) at (2,.7);
\coordinate[label={[label distance=-1pt] 0:$d_0d_1$}] (24) at (2,1);

\draw (0)--(9);
\draw (0)--(10);
\draw (0)--(11);
\draw (1)--(13);
\draw (1)--(14);
\draw (1)--(15);
\draw (1)--(18);
\draw (1)--(19);
\draw (1)--(21);
\draw (1)--(22);
\draw (1)--(24);
\draw (2)--(9);
\draw (2)--(10);
\draw (2)--(11);
\draw (3)--(9);
\draw (3)--(10);
\draw (3)--(11);
\draw (4)--(7);
\draw (5)--(9);
\draw (7)--(12);
\draw (7)--(13);
\draw (7)--(14);
\draw (7)--(15);
\draw (7)--(17);
\draw (7)--(18);
\draw (7)--(19);
\draw (7)--(21);
\draw (7)--(22);
\draw (7)--(24);
\draw (8)--(13);
\draw (9)--(12);
\draw (9)--(13);
\draw (9)--(14);
\draw (9)--(15);
\draw (9)--(17);
\draw (9)--(18);
\draw (9)--(19);
\draw (9)--(21);
\draw (9)--(22);
\draw (9)--(24);
\draw (10)--(12);
\draw (10)--(13);
\draw (10)--(17);
\draw (11)--(12);
\draw (11)--(13);
\draw (11)--(17);
\draw (0)--(7)  (3)--(7) (7)--(5) (7)--(2)  (2)--(1) (2)--(8) (1)--(4) 
(3)--(1) (3)--(8) (1)--(5); 
\draw (9)--(22) (22)--(1);
\draw[thick, purple] (0)--(1) (0)--(8)
(24)--(1) (24)--(9) (9)--(0) (9)--(12) (9)--(17) (1)--(12)--(8)
(1)--(17)--(8) (4)--(9) (4)--(1);
\filldraw[purple] (0) circle (.5pt);
\filldraw[purple] (1) circle (.5pt);
\filldraw (2) circle (.5pt);
\filldraw (3) circle (.5pt);
\filldraw (22) circle (.5pt);
\filldraw (5) circle (.5pt);
\filldraw (7) circle (.5pt);
\filldraw[purple] (8) circle (.5pt);
\filldraw[purple] (24) circle (.5pt);
\filldraw[purple] (9) circle (.5pt);
\filldraw (10) circle (.5pt);
\filldraw (11) circle (.5pt);
\filldraw[purple] (12) circle (.5pt);
\filldraw (13) circle (.5pt);
\filldraw (14) circle (.5pt);
\filldraw (15) circle (.5pt);
\filldraw[purple] (17) circle (.5pt);
\filldraw (18) circle (.5pt);
\filldraw (19) circle (.5pt);
\filldraw (21) circle (.5pt);
\filldraw[purple] (4) circle (.5pt);
\end{tikzpicture}
    \caption{An example $T$, $\Theta(\Gamma,\Lambda)$, and $\Delta$
      (purple) sitting in $\diag(\Gamma)$.}
    \label{fig:mixedmultipletree}
  \end{figure}
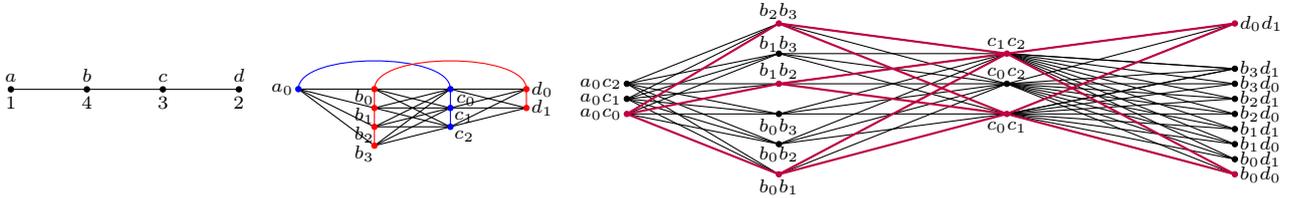
\end{exam}

\section{A search algorithm}\label{search}

\begin{defin}
  A \emph{FIDL--$\Lambda$ for $\Gamma$ relative to} a
  collection of pairs of vertices $\{\{p_0,q_0\},\dots,\{p_n,q_n\}\}$
  of $\Gamma$ is a FIDL--$\Lambda$
  that contains each of the pairs $\{p_i,q_i\}$ as an edge of $\Lambda$.
\end{defin}
\begin{ralg}\label{relativesearch}
   Given an incomplete, triangle-free graph $\Gamma$ without
   separating cliques, find a  FIDL--$\Lambda$ relative to
   $\{\{p_0,q_0\},\dots,\{p_n,q_n\}\}$ or decide that one does not
   exist as follows:
   \begin{enumerate}
     \item If $\Gamma$ is not strongly \CFS\, stop; No FIDL--$\Lambda$
       exists. \label{item:strongcfs}
       \item If $\Gamma$ contains a cycle violating Proposition~\ref{nolongcycles},
         stop; No FIDL--$\Lambda$
       exists. \label{item:nolongchordlesscycles}
   \item If there exists $x_0,
     x_1,\dots,x_{n-1}$ such that for each $i<n$ there exists $j$ with
     $\{x_i,x_{i+1}\}=\{p_j,q_j\}$, then stop; No relative FIDL--$\Lambda$
     exists.\label{item:cycle}
       \item Enumerate satellite-dismantling sequences reducing
         $\Gamma$ to a square through graphs without separating
         cliques.
         If none exist, stop; No FIDL--$\Lambda$ exists.\label{item:dismantle}
    \item For each such satellite-dismantling sequence, check, in
      the notation of Theorem~\ref{maintheorem}, if Condition
      \eqref{eq:1}  is satisfied.
      Moreover, if we assume for each $k$ that for all $i$,
      $q_k\in\Gamma_i\implies p_k\in\Gamma_i$, then we require for
      each $k$ that either $\{p_k,q_k\}$ is a diagonal of the square
      $\Gamma_0$ or that for $i$ such that $q_k=x_{i+1}$, we have that
      $v_i=p_k$ is
      a choice for $v_i\in V_i$ satisfying Condition
      \eqref{eq:1}. \label{item:relcheck}
      \item If a suitable satellite-dismantling sequence is found,
        then Theorem~\ref{maintheorem} provides the relative
        FIDL--$\Lambda$. Otherwise, no relative FIDL--$\Lambda$ exists.
   \end{enumerate}
\end{ralg}
\begin{proof}
  Item~\eqref{item:strongcfs} is Corollary~\ref{cor:strongcfs}.
  If the given $\{\{p_0,q_0\},\dots,\{p_n,q_n\}\}$ contains a cycle as
  described in \eqref{item:cycle}, then the resulting $\Lambda$
  contains a cycle, violating $\mathcal{R}_1$. Item
  \eqref{item:dismantle} is Theorem~\ref{satellitedismantling}.
  Item \eqref{item:relcheck} describes how it is possible to achieve
  the $\{p_k,q_k\}$ as edges of FIDL--$\Lambda$ from the argument of Theorem~\ref{maintheorem}.
\end{proof}

\begin{galg} \label{SearchAlg}
  Given an incomplete, triangle-free graph $\Gamma$ without separating
  cliques, find a FIDL--$\Lambda$ or decide that one does not exist as follows:
  \begin{enumerate}
    \item If $\Gamma$ is not strongly \CFS, stop; No FIDL--$\Lambda$
      exists. \label{item:cfsagain}
    \item If $\Gamma$ contains a cycle violating Proposition~\ref{nolongcycles},
         stop; No FIDL--$\Lambda$
       exists. \label{item:nolongchordlesscyclesagain}
    \item Compute the JSJ graph of cylinders for $W_\Gamma$ in terms
      of $\Gamma$ as described in
      \cite[Theorem~3.29]{edletzberger2021quasi} (recall Theorem~\ref{ThmJSJGOCforRACGs}).
      If it has a hanging vertex, stop; $W_\Gamma$ is not
      even quasiisometric to a RAAG.\label{item:jsj} 
    \item For each subgraph $\Gamma'$ corresponding to a rigid vertex
      of the graph of cylinders, use Relative Search Algorithm~\ref{relativesearch} to find a
      FIDL--$\Lambda$ for $\Gamma'$ relative to the pairs
      $\{p,q\}\subset\Gamma'$ such that $\Gamma$ has a cut
      $\cut{p}{q}$.
      If this search fails for any rigid vertex, stop; No
      FIDL--$\Lambda$ exists.\label{item:relfidl}
     \item If all rigid vertices have relative FIDL--$\Lambda$ then
       they can be assembled into a FIDL--$\Lambda$ of $\Gamma$.\label{item:assembly}
  \end{enumerate}
\end{galg}
\begin{proof}
  \cite[Proposition~4.43]{thesis}, shows that
  if $\Gamma'$ is the subgraph corresponding to a rigid vertex, then
  it has no separating cliques and is \CFS.
  Iterating the cutting direction of Proposition~\ref{prop:splitting} implies that if $\Gamma$ has a
  FIDL--$\Lambda$ then its restriction to $\Gamma'$ is a
  FIDL--$\Lambda$ for $\Gamma'$ relative to the
  cuts. Together with  Corollary~\ref{cor:strongcfs}, this explains
  the necessity of Items~\eqref{item:jsj} and \eqref{item:relfidl}.

  Torsion-generated groups do not surject $\mathbb{Z}$, so the
  underlying graph of the graph of cylinders is a tree.
Iterating the assembly direction of Proposition~\ref{prop:splitting}
over the cuts combines the relative FIDL--$\Lambda$'s of the subgraphs
of the rigid vertices into a FIDL--$\Lambda$ for $\Gamma$,
establishing Item~\eqref{item:assembly}.
\end{proof}

\section{Performance}\label{comparison}
We have not analyzed the complexity of the naive `enumerate and check
$\Lambda$' vs satellite-dismantling algorithms.
We have an older implementation of the naive algorithm and of the
one in this paper,  and observe that the
satellite-dismantling algorithm performs faster on batches of smallish
($\leq 12$ vertices) graphs.
However, this is not a fair comparison, as the new algorithm also
incorporates other results from this paper, such as that the presence of long
embedded cycle obstructs the existence of a FIDL--$\Lambda$, and that
if $W_\Gamma$ has a non-trivial JSJ decomposition then it suffices to
patch together FIDL--$\Lambda$ for the rigid components.
These could have also been used to improve the naive algorithm. 
Nevertheless, we conjecture that the satellite-dismantling algorithm 
has better generic-case complexity than the naive algorithm, because
it can fail fast: most graphs do not admit a FIDL--$\Lambda$ and the naive algorithm must enumerate and test all $\Lambda$
to confirm none work, but the 
satellite-dismantling algorithm can quit immediately if the graph has
no satellite-dismantling sequence.

One might also wonder about the efficiency of the search for a
suitable satellite-dismantling sequence.
For example, if $\Gamma$ is a triangle-free strongly \CFS\ graph that
has a satellite-dismantling sequence to a square through strongly
\CFS\ graphs, and if $v$ is some satellite vertex of $\Gamma$ such that
$\Gamma\setminus\{v\}$ is strongly \CFS\, does $\Gamma\setminus\{v\}$
admit a satellite-dismantling sequence to a square through strongly
\CFS\ graphs?
Ie, does finding a full sequence depend on choosing satellites in the right order?
If the order does not matter this would speed up the search for satellite-dismantling
sequences: upon the first failure to extend a dismantling sequence all
the way to a square we could immediately quit rather than backtracking
to try different dismantling sequences.
However, once we know that dismantling sequences exist the
story is different; satisfying  Condition~\eqref{eq:1} of Theorem~\ref{maintheorem}
does depend on the chosen sequence, as can be seen in the
following example:
\begin{exam}\label{ex:ordermatters}
  \begin{figure}[h]
    \centering
    $\Gamma=$
  \begin{tikzpicture}[baseline=(0.base)]
    \tiny
    \coordinate[label={[label distance=-1pt] 45:$0$}] (0) at (0,0);
    \coordinate[label={[label distance=-1pt] 90:$1$}] (1) at (0,1);
    \coordinate[label={[label distance=-1pt] 180:$2$}] (2) at (-1,0);
    \coordinate[label={[label distance=-1pt] 270:$3$}] (3) at (0,-1);
    \coordinate[label={[label distance=-1pt] 0:$4$}] (4) at (1,0);
    \coordinate[label={[label distance=-1pt] 90:$5$}] (5) at (-.5,0);
    \coordinate[label={[label distance=-1pt] 90:$6$}] (6) at (0.5,0);
    \draw (0)--(1)--(2)--(3)--(4)--(1) (0)--(3) (2)--(5)--(0)--(6)--(4);
    \filldraw (0) circle (1pt) (1) circle (1pt) (2) circle (1pt) (3)
    circle (1pt) (4) circle (1pt) (5) circle (1pt) (6) circle (1pt);
  \end{tikzpicture}
    \caption{Graph for Example~\ref{ex:ordermatters}.}
    \label{fig:ordermatters}
  \end{figure}
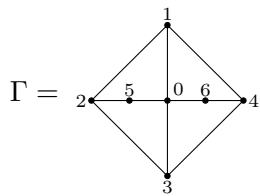
  Consider the graph in Figure~\ref{fig:ordermatters}.
The vertex sequence 6, 5, 0 yields a satellite-dismantling sequence
reducing $\Gamma$  to a square $\Gamma_0=\{1,3\}\join\{2,4\}$,
  such that all $\Gamma_i$  are strongly
  \CFS, and:
  \begin{tabular}{c c c}
    $x_1=0$&$N_0=\{1,3\}$&$V_0=\{2,4\}$\\
    $x_2=5$&$N_1=\{0,2\}$&$V_1=\{1,3\}$\\
    $x_3=6$&$N_2=\{0,4\}$&$V_2=\{1,3\}$\\
  \end{tabular}

  For $i=0$, $x_{i+1}=0\in N_1\cap N_2$, both
  of which intersect $\Gamma_i$, so
  to satisfy Condition~\eqref{eq:1} of Theorem~\ref{maintheorem} we
  would need to choose $v_0\in V_0\cap N_1\cap N_2=\emptyset$.
  If we reconstruct $\Gamma$ by a coning sequence from this data we
  see that there are two possibilities up to symmetry, shown in Figures~\ref{sf:one} and \ref{sf:two}, and
  both contain a vertex with non-$\Lambda$--convex link.

  Consider instead the vertex  sequence 6, 5, 4, which yields a
  satellite-dismantling sequence reducing $\Gamma$ to the square
  $\Gamma_0=\{0,2\}\join\{1,3\}$ with all 
 $\Gamma_i$ 
  strongly \CFS, and:  
  \begin{tabular}{c c c}
    $x_1=4$&$N_0=\{1,3\}$&$V_0=\{0,2\}$\\
    $x_2=5$&$N_1=\{0,2\}$&$V_1=\{1,3\}$\\
    $x_3=6$&$N_2=\{0,4\}$&$V_2=\{1,3\}$\\
  \end{tabular}

  For $i=0$, $x_{i+1}=4\in N_2$ and $N_2\cap\Gamma_i\neq\emptyset$,
  but $x_{i+1}\notin N_1$, so we satisfy
 \eqref{eq:1} by choosing $v_0\in V_0\cap
  N_2=\{0\}$.
  For $i=1,2$,  \eqref{eq:1} imposes no
  condition except $v_i\in V_i$.
  The two possibilities, up to symmetry, shown in Figures~\ref{sf:three} and \ref{sf:four}, are both FIDL--$\Lambda$s.

  \begin{figure}[h]
    \centering
    \begin{subfigure}{.24\textwidth}
          \centering
      \begin{tikzpicture}
    \tiny
    \coordinate[label={[label distance=-1pt] 45:$$}] (0) at (0,0);
    \coordinate[label={[label distance=-1pt] 90:$$}] (1) at (0,1);
    \coordinate[label={[label distance=-1pt] 180:$$}] (2) at (-1,0);
    \coordinate[label={[label distance=-1pt] 270:$$}] (3) at (0,-1);
    \coordinate[label={[label distance=-1pt] 0:$$}] (4) at (1,0);
    \coordinate[label={[label distance=-1pt] 90:$$}] (5) at (-.5,0);
    \coordinate[label={[label distance=-1pt] 90:$$}] (6) at (0.5,0);
    \draw (0)--(1)--(2)--(3)--(4)--(1) (0)--(3)
    (2)--(5)--(0)--(6)--(4);
    \draw[red] (1) .. controls (.35,0) and (.35,0) .. (3) (1)--(5)
    (1)--(6);
    \draw[blue] (0) .. controls (0,.25) and (-1,.25) .. (2) (2)
    .. controls (-1,-.25) and (1,-.25) .. (4);
    \filldraw (0) circle (1pt) (1) circle (1pt) (2) circle (1pt) (3)
    circle (1pt) (4) circle (1pt) (5) circle (1pt) (6) circle (1pt);
  \end{tikzpicture}
  \caption{}\label{sf:one}
    \end{subfigure}
     \begin{subfigure}{.24\textwidth}    \centering
      \begin{tikzpicture}
    \tiny
    \coordinate[label={[label distance=-1pt] 45:$$}] (0) at (0,0);
    \coordinate[label={[label distance=-1pt] 90:$$}] (1) at (0,1);
    \coordinate[label={[label distance=-1pt] 180:$$}] (2) at (-1,0);
    \coordinate[label={[label distance=-1pt] 270:$$}] (3) at (0,-1);
    \coordinate[label={[label distance=-1pt] 0:$$}] (4) at (1,0);
    \coordinate[label={[label distance=-1pt] 90:$$}] (5) at (-.5,0);
    \coordinate[label={[label distance=-1pt] 90:$$}] (6) at (0.5,0);
    \draw (0)--(1)--(2)--(3)--(4)--(1) (0)--(3)
    (2)--(5)--(0)--(6)--(4);
        \draw[red] (1) .. controls (.35,0) and (.35,0) .. (3) (1)--(5)
        (3)--(6);
         \draw[blue] (0) .. controls (0,.25) and (-1,.25) .. (2) (2)
    .. controls (-1,-.25) and (1,-.25) .. (4);
    \filldraw (0) circle (1pt) (1) circle (1pt) (2) circle (1pt) (3)
    circle (1pt) (4) circle (1pt) (5) circle (1pt) (6) circle (1pt);
  \end{tikzpicture}
    \caption{}\label{sf:two}
     \end{subfigure}
      \begin{subfigure}{.24\textwidth}    \centering
      \begin{tikzpicture}
    \tiny
    \coordinate[label={[label distance=-1pt] 45:$$}] (0) at (0,0);
    \coordinate[label={[label distance=-1pt] 90:$$}] (1) at (0,1);
    \coordinate[label={[label distance=-1pt] 180:$$}] (2) at (-1,0);
    \coordinate[label={[label distance=-1pt] 270:$$}] (3) at (0,-1);
    \coordinate[label={[label distance=-1pt] 0:$$}] (4) at (1,0);
    \coordinate[label={[label distance=-1pt] 90:$$}] (5) at (-.5,0);
    \coordinate[label={[label distance=-1pt] 90:$$}] (6) at (0.5,0);
    \draw (0)--(1)--(2)--(3)--(4)--(1) (0)--(3)
    (2)--(5)--(0)--(6)--(4);
    \draw[red] (1) .. controls (.35,0) and (.35,0) .. (3) (1)--(5)
    (1)--(6);
    \draw[blue] (0) .. controls (0,-.25) and (-1,-.25) .. (2) (0)
    .. controls (0,.25) and (1,.25) .. (4);
    \filldraw (0) circle (1pt) (1) circle (1pt) (2) circle (1pt) (3)
    circle (1pt) (4) circle (1pt) (5) circle (1pt) (6) circle (1pt);
  \end{tikzpicture}
      \caption{}\label{sf:three}
      \end{subfigure}
       \begin{subfigure}{.24\textwidth}    \centering
      \begin{tikzpicture}
    \tiny
    \coordinate[label={[label distance=-1pt] 45:$$}] (0) at (0,0);
    \coordinate[label={[label distance=-1pt] 90:$$}] (1) at (0,1);
    \coordinate[label={[label distance=-1pt] 180:$$}] (2) at (-1,0);
    \coordinate[label={[label distance=-1pt] 270:$$}] (3) at (0,-1);
    \coordinate[label={[label distance=-1pt] 0:$$}] (4) at (1,0);
    \coordinate[label={[label distance=-1pt] 90:$$}] (5) at (-.5,0);
    \coordinate[label={[label distance=-1pt] 90:$$}] (6) at (0.5,0);
    \draw (0)--(1)--(2)--(3)--(4)--(1) (0)--(3)
    (2)--(5)--(0)--(6)--(4);
        \draw[red] (1) .. controls (.35,0) and (.35,0) .. (3) (1)--(5)
        (3)--(6);
         \draw[blue] (0) .. controls (0,-.25) and (-1,-.25) .. (2) (0)
    .. controls (0,.25) and (1,.25) .. (4);
    \filldraw (0) circle (1pt) (1) circle (1pt) (2) circle (1pt) (3)
    circle (1pt) (4) circle (1pt) (5) circle (1pt) (6) circle (1pt);
  \end{tikzpicture}
      \caption{}\label{sf:four}
    \end{subfigure}
    \caption{Some potential $\Lambda$ for $\Gamma$ from reverse coning
      of a satellite-dismantling.}
    \label{fig:potentialLambda}
  \end{figure}
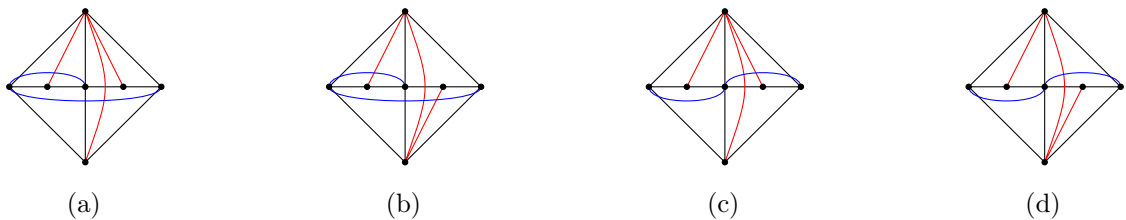

\end{exam}

\bibliographystyle{hypersshort}
\bibliography{main}

\providecommand{\bysame}{\leavevmode\hbox to3em{\hrulefill}\thinspace}
\providecommand{\MR}{\relax\ifhmode\unskip\space\fi MR }
\providecommand{\MRhref}[2]{%
  \href{http://www.ams.org/mathscinet-getitem?mr=#1}{#2}
}
\providecommand{\href}[2]{#2}
\providecommand{\doi}[1]{doi: #1}
\begin{thebibliography}{10}

\bibitem{BehrstockCharneyDivRAAGs}
J.~Behrstock and R.~Charney,
  \emph{\href{http://dx.doi.org/10.1007/s00208-011-0641-8}{Divergence and
  quasimorphisms of right-angled {A}rtin groups}}, Math. Ann. \textbf{352}
  (2012), no.~2, 339--356.

\bibitem{BehrstockCFS}
J.~Behrstock, V.~Falgas-Ravry, M.~F. Hagen, and T.~Susse,
  \emph{\href{http://dx.doi.org/10.1093/imrn/rnw287}{Global structural
  properties of random graphs}}, Int. Math. Res. Not. IMRN (2018), no.~5,
  1411--1441.

\bibitem{CashenMartinStructureInvar}
C.~H. Cashen and A.~Martin,
  \emph{\href{http://dx.doi.org/10.1017/S0305004116000530}{Quasi-isometries
  between groups with two-ended splittings}}, Math. Proc. Cambridge Philos.
  Soc. \textbf{162} (2017), no.~2, 249--291.

\bibitem{CharneyRAAGs}
R.~Charney, \emph{\href{http://dx.doi.org/10.1007/s10711-007-9148-6}{An
  introduction to right-angled {A}rtin groups}}, Geom. Dedicata \textbf{125}
  (2007), 141--158.

\bibitem{danilev24}
P.~Dani and I.~Levcovitz,
  \emph{\href{http://dx.doi.org/10.2140/agt.2024.24.755}{Right-angled {A}rtin
  subgroups of right-angled {C}oxeter and {A}rtin groups}}, Algebr. Geom.
  Topol. \textbf{24} (2024), no.~2, 755--802.

\bibitem{DaniThomasDiv}
P.~Dani and A.~Thomas,
  \emph{\href{http://dx.doi.org/10.1090/S0002-9947-2014-06218-1}{Divergence in
  right-angled {C}oxeter groups}}, Trans. Amer. Math. Soc. \textbf{367} (2015),
  no.~5, 3549--3577.

\bibitem{DavisCoxeter}
M.~W. Davis, \emph{The geometry and topology of {C}oxeter groups}, London
  Mathematical Society Monographs Series, vol.~32, Princeton University Press,
  Princeton, NJ, 2008.

\bibitem{davisjanus}
M.~W. Davis and T.~Januszkiewicz,
  \emph{\href{http://dx.doi.org/10.1016/S0022-4049(99)00175-9}{Right-angled
  {A}rtin groups are commensurable with right-angled {C}oxeter groups}}, J.
  Pure Appl. Algebra \textbf{153} (2000), no.~3, 229--235.

\bibitem{thesis}
A.~Edletzberger, \emph{Quasi-{I}sometries for two-dimensional {R}ight-{A}ngled
  {C}oxeter {G}roups}, Phd {T}hesis, U.\ Vienna, 2024.

\bibitem{edletzberger2021quasi}
A.~Edletzberger,
  \emph{\href{http://dx.doi.org/10.4171/GGD/779}{Quasi-isometries for certain
  right-angled {C}oxeter groups}}, Groups Geom. Dyn. (in press).

\bibitem{JSJDecompGps}
V.~Guirardel and G.~Levitt, \emph{J{SJ} decompositions of groups},
  Ast\'{e}risque (2017), no.~395, vii+165.

\bibitem{LaForgeVisArtin}
G.~LaForge, \emph{Visible {A}rtin subgroups of right-angled {C}oxeter groups},
  Ph.D. thesis, Tufts, 2017.

\bibitem{margolis2018quasi}
A.~Margolis, \emph{\href{http://dx.doi.org/10.4171/ggd/584}{Quasi-isometry
  classification of right-angled {A}rtin groups that split over cyclic
  subgroups}}, Groups Geom. Dyn. \textbf{14} (2020), no.~4, 1351--1417.

\bibitem{MihalikTschantzVisual}
M.~Mihalik and S.~Tschantz,
  \emph{\href{http://dx.doi.org/10.4171/GGD/53}{Visual decompositions of
  {C}oxeter groups}}, Groups Geom. Dyn. \textbf{3} (2009), no.~1, 173--198.

\bibitem{NguyenTranPlanar}
H.~T. Nguyen and H.~C. Tran,
  \emph{\href{http://dx.doi.org/10.2140/agt.2019.19.3075}{On the coarse
  geometry of certain right-angled {C}oxeter groups}}, Algebr. Geom. Topol.
  \textbf{19} (2019), no.~6, 3075--3118.

\end{thebibliography}

\end{document}